\newtheorem*{theorema}{Theorem A}
\newtheorem*{theoremb}{Theorem B}
\newtheorem*{theoremd}{Lemma (the Realization Lemma)}
\newtheorem*{theoremshinoda}{Theorem 1}
\newtheorem*{theoremcont}{Theorem 2}
\newtheorem*{cor3}{Corollary 3}
\newtheorem{lemma}{Lemma}[section]
\begin{document}
\author{Mao Shinoda and Hiroki Takahasi}

%\dedicatory{Dedicated to Hiroshi Kokubu on the occasion of his 60th birthday}

\address{Department of Mathematics,
Keio University, Yokohama,
223-8522, JAPAN} 
\email{shinoda-mao@z3.keio.jp}
\email{hiroki@math.keio.ac.jp}
\urladdr{\texttt{http://www.math.keio.ac.jp/~hiroki/}}
%\address{Phone: +81-45-566-1641 +42721}
%\address{Fax: +81-45-566-1642}
\subjclass[2010]{37A40, 37C40, 37D05, 37D35, 37E05}
\thanks{{\it Keywords}: expanding Markov map; Lyapunov optimizing measure; non-generic property}

\thanks{%\subsection*{Acknowledgments}27D20
M. S. was partially supported by the Grant-in-Aid for JSPS Research Fellows.
H. T. was partially supported by 
the Grant-in-Aid for Young Scientists (A) of the JSPS 15H05435 and
the Grant-in-Aid for Scientific Research (B) of the JSPS 16KT0021.
Both authors were partially supported by the JSPS Core-to-Core Program ``Foundation
of a Global Research Cooperative Center in Mathematics focused on Number Theory and Geometry''.}

\title[ Lyapunov optimization]  
{Lyapunov optimization for non-generic\\ one-dimensional expanding Markov  maps}

\date{\today}

\begin{abstract}
For a non-generic, yet dense subset of $C^1$ expanding Markov maps of the interval 
we prove the existence 
of uncountably many Lyapunov optimizing measures which are ergodic, fully supported and have positive entropy.
These measures are equilibrium states for some H\"older continuous potentials.
We also prove the existence of another non-generic dense subset for which  
the optimizing measure is unique and supported on a periodic orbit.
A key ingredient is a new $C^1$ perturbation lemma which allows us to interpolate 
between expanding Markov maps and the shift map on a finite number of symbols.
\end{abstract}
\maketitle

\section{Introduction}
Ergodic optimization aims to describe
invariant probability measures of a dynamical system which optimize the integral of
a given ``performance'' function. 
In its basic form, given a continuous self-map $T$ of a compact metric space $M$
and a real-valued continuous function $\phi$ on $M$
one seeks {\it $\phi$-minimizing measures}, i.e., measures in $\mathcal M(T)$ which attain the infimum
$$\inf\left\{\int \phi d\mu\colon\mu\in\mathcal M(T)\right\},$$
where $\mathcal M(T)$ denotes the set of $T$-invariant Borel probability measures.
%Maximizing measures are defined similarly.
A root of this theory is found in the works of Mather \cite{Mat91} and Ma\~n\'e \cite{Man96,Man97}
on the dynamics of the Euler-Lagrange flow:
orbits with prescribed properties can be obtained by considering ergodic invariant probability measures
which minimize the integral of the Lagrangian, % in certain homology classes,
and the orbits are obtained as typical points for these measures
(called {\it action minimizing measures}, see Sorrentino \cite{Sor15}).

A general belief is that the minimizing measure is unique and supported on a periodic orbit,
for ``most'' performance functions.
The meaning of ``most'' is the genericity in the sense of Baire's Category Theorem, 
  the typicality in the sense of the Lebesgue measure in parameter space, and so on.
For instance,  see Ma\~n\'e's conjecture \cite{Man97}, and that of Yuan and Hunt \cite{YuaHun99}
after numerical observations by Hunt and Ott  \cite{HunOtt96a,HunOtt96b}.
 %The uniqueness of minimizing measure and its low-complexity (supported on a periodic orbit,
 %and so with zero entropy) for most functions have been established for different function spaces 
%with different assumptions on the hyperbolicity of the underlying dynamical systems.
For the doubling map $x\mapsto 2x$ (mod $1$) and a parametrized performance function
$\cos(2\pi(x-\theta))$ $(\theta\in\mathbb R)$, Bousch  \cite{Bou00} proved
that for all $\theta$ the minimizing measure is unique, and is supported on a subset of a semi-circle.
Further, he proved that for Lebesgue almost every $\theta$ the corresponding minimizing measure is supported on a periodic orbit.
Contreras {\it et al} \cite{ConLopThi01} studied $C^1$ expanding maps of the circle and performance functions in the Banach space $C^{0,\alpha}$
 of $\alpha$-H\"older continuous functions. They considered its subspace 
 consisting of those $\phi\in C^{0,\alpha}$ for which $|\phi(x)-\phi(y)|=o(|x-y|^\alpha)$, and showed that 
 for an open dense subset of this subspace the minimizing measure 
 is unique and supported on a periodic orbit.
 For transitive and ``expanding'' dynamical systems,
 the uniqueness of minimizing measure was established
for open dense subsets of suitable separable Banach spaces,
 see Bousch  \cite{Bou01} and Contreras \cite{Con16}.
 Quas and Siefken \cite{QuaSie12} proved the uniqueness of minimizing measure
for the one-sided full shift and an open dense subset of a certain non-separable Banach space.
In all these three results, the unique minimizing measure is supported on a periodic orbit.
%For a transitive subshift of finite type, the uniqueness of minimizing measure was established
%for open dense subsets of suitable Banach spaces, see Bousch  \cite{Bou01}, Quas and Siefken \cite{QuaSie12},
%Contreras \cite{Con16}. In all these results, the unique minimizing measure is supported on a periodic orbit.
For generic functions in the space $C^0$ of continuous functions 
the uniqueness of minimizing measure still holds, but the measure is fully supported,
see Bousch and Jenkinson \cite{BouJen02}, Jenkinson \cite{Jen06}.
Br\'emont \cite{Bre08} proved that for generic $C^0$ functions, any minimizing measure has zero entropy.

In these developments, the non-uniqueness of minimizing measure was considered somewhat irrelevant,
as it is considered to be a non-generic property. However, the non-uniqueness of action-minimizing measure is a universal phenomenon
(the action minimizing measure is, in a sense, a generalization of the KAM tori \cite{Sor15}).
In the context of the thermodynamic formalism, the study of the non-uniqueness of minimizing measure has a connection 
with the asymptotic behavior of Gibbs measures as the temperature drops to zero, see Baraviera, Leplaideur and Lopes
\cite{BarLepLop13}.
 %zero-temperature limit of equilibrium measures 
%is intimately connected with the (non-)uniqueness of minimizing measures.
%Apart from the convergent cases \cite{Bre03,Lep05}, the divergent cases \cite{ChaHoc10,CorRiv15} are
%very interesting, in which cases the optimizing measures are not unique.
% in the context of the thermodynamic formalism,
%the analysis of {\it the zero-temperature limit of equilibrium states} (See e.g. \cite{BarLepLop13})
%\cite{Bre03,Lep05,ChaHoc10,CorRiv15} 
%becomes more interesting if optimizing measures are not unique.
%It is fait to say that, 
Despite its importance,
the non-uniqueness of minimizing measure has not yet received an adequate deal of attention.

In this paper we treat expanding Markov maps of the interval, and show that
the uniqueness of Lyapunov optimizing measure fails in a severe way.
Let $p\geq2$ be an integer and $\{\alpha_i\}_{i=0}^{p-1}$,
 $\{\beta_i\}_{i=0}^{p-1}$ sequences in $[0,1]$ such that
\begin{equation*}
0=\alpha_0<\beta_0<\alpha_1<\beta_1<\alpha_2<\cdots<\beta_{p-1}=1.\end{equation*}
Put $X=\bigcup_{i=0}^{p-1}[\alpha_i,\beta_i]$.
Denote by $\mathscr{E}$
the set of $C^1$ maps on $X$ such that
the following holds for every $f\in \mathscr{E}$:

\begin{itemize}
\item[(E1)] $f$ maps each interval $[\alpha_i,\beta_i]$, $i\in\{0,\ldots,p-1\}$
diffeomorphically onto $[0,1]$;

\item[(E2)]
there exist constants $c>0$, $\lambda>1$ such that for every $n\geq1$ and
every $x\in\bigcap_{k=0}^{n-1} f^{-k}\left(X\right)$,
 $|Df^n(x)|\geq c\lambda^n$.  \end{itemize}
We endow $\mathscr{E}$ with the $C^1$ topology given by the norm
$$\|f\|_{C^1}=\sup_{x\in X}|f(x)|+\sup_{x\in X}|Df(x)|.$$
%Here, $f\in\mathscr{E}$ and 
%for a real-valued continuous function $g$ on $X$,
%define $$\|g\|_{C^0}=\sup_{x\in X}|g(x)|.$$
The space $\mathscr{E}$ is an open subset of $C^1$ functions on $X$ (See Lemma \ref{standard} to check the openness of (E2)),
and hence becomes a (non-complete) Baire space.

Define
$$\Lambda(f)=\bigcap_{n\geq0} f^{-n}\left(X\right).$$
Restricting $f$ to $\Lambda(f)$
we obtain a dynamical system which is
also denoted by $f$ with a slight abuse of notation.
Then $\Lambda(f)$ is a Cantor set with a Markov partition
 given by the collection $\{[\alpha_i,\beta_i]\}_{i=0}^{p-1}$ of intervals which topologically conjugates $f$ to
the left shift on $p$ symbols.

%Since $\Lambda(f)$ is a compact set and $f$ is continuous, $\mathcal M(f)$ is a nonempty compact set.

Put $$\chi_{\rm inf}(f)=\inf\left\{\int\log|Df|d\mu\colon\mu\in\mathcal M(f)\right\}$$
and
$$\chi_{\rm sup}(f)=\sup\left\{\int\log|Df|d\mu\colon\mu\in\mathcal M(f)\right\}.$$
Since $\mathcal M(f)$ is compact and $\nu\in\mathcal M(f)\mapsto\int\log|Df|d\nu$ is continuous,
the infimum and supremum are attained. 
A measure $\mu\in\mathcal M(f)$ is {\it Lyapunov minimizing} if
$$\int \log|Df|d\mu=\chi_{\inf}.$$
 {\it Lyapunov maximizing measures} is defined similarly, with $\chi_{\rm inf}$ replaced by $\chi_{\rm sup}$.

%For $C^1$-generic expanding maps of the circle,
%there exists a unique Lyapunov minimizing measure and it is fully supported, has zero entropy
%\cite{Bre08,JenMor08}.

The notion of Lyapunov optimizing measures was introduced by Contreras {\it et al} \cite{ConLopThi01}. 
They showed that for an open dense subset of  the space $\bigcup_{\beta>\alpha} C^{1+\beta}$ of expanding maps
of the circle
in the $C^{1+\alpha}$ topology,
the Lyapunov minimizing measure is unique and supported on a periodic orbit.
For a generic $C^1$ expanding map of the circle, Jenkinson and Morris \cite{JenMor08} proved that
the Lyapunov minimizing measure is unique and has zero entropy.
See Morita and Tokunaga \cite{MorTok13}, Tokunaga \cite{Tok15} for extensions of the results
of \cite{JenMor08}  to higher dimension.
With the method of 
\cite{JenMor08} one can
show that for generic maps in $\mathscr{E}$ the Lyapunov minimizing measure is unique,
and it is fully supported, has zero entropy. 
In the realm of non-genericity the structure of Lyapunov minimizing measures is in contrast.

\begin{theorema}
There exists a dense subset $\mathscr{A}$ of $\mathscr{E}$ such that the following holds
for every $f\in\mathscr{A}$:

\begin{itemize}
\item[-] $\chi_{\rm inf}(f)\neq  \chi_{\rm sup}(f)$;
\item[-] there exist uncountably many Lyapunov minimizing measures of $f$
which are ergodic, fully supported and have positive entropy;
%These measures are equilibrium states of $f$ for 
%some H\"older continuous potentials;
\item[-]  $\log|Df|$ is not H\"older continuous.
\end{itemize}\end{theorema}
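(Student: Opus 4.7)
The plan is constructive: given $f_0\in\mathscr{E}$ and $\varepsilon>0$, I would produce $f\in\mathscr{A}$ with $\|f-f_0\|_{C^1}<\varepsilon$ as the $C^1$-limit of a sequence $(f_n)_{n\geq 0}$ built by iterative application of the Realization Lemma. Using the Markov conjugacy $\Lambda(f)\simeq\Sigma_p=\{0,\ldots,p-1\}^{\mathbb{N}}$, the task rephrases symbolically: I need to engineer $\phi:=\log|Df|$ (pulled back to $\Sigma_p$) to be continuous but non-H\"older so that its Lyapunov minimizing face in $\mathcal{M}(f)$ contains an uncountable family of ergodic, fully supported, positive-entropy measures, while some invariant measure has strictly larger $\phi$-integral.

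To organize the induction I would fix a target value $c$ with $\chi_{\rm inf}(f_0)<c<\chi_{\rm sup}(f_0)$; a countable dense family $(\mu_k)_{k\geq 1}$ of ergodic, fully supported, positive-entropy shift-invariant measures sitting inside a continuous parametric family (e.g.\ Bernoulli measures parameterized over an open subset of the probability simplex); an enumeration $(P_n)_{n\geq 1}$ of the periodic orbits to be calibrated; and a distinguished periodic orbit $P_\star$ whose $\log|Df_0|$-Birkhoff average strictly exceeds $c$, to be left untouched throughout. At stage $n$, the Realization Lemma provides a $C^1$-perturbation, of norm less than $\varepsilon/2^n$, supported on Markov cylinders disjoint from $P_\star$, interpolating $f_{n-1}$ with a shift-like (constant-derivative) model on certain depth-$n$ cylinders; the interpolation is arranged so that (a) the Birkhoff averages of $\log|Df_n|$ along $P_1,\ldots,P_n$ all equal $c$ and (b) $|\int\log|Df_n|\,d\mu_k-c|<1/n$ for $k\leq n$.

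Summability of the perturbations together with the openness of (E2) yields $f:=\lim f_n\in\mathscr{E}$ with $\|f-f_0\|_{C^1}<\varepsilon$. Continuity of $\mu\mapsto\int\log|Df|\,d\mu$ promotes (b) to the exact equality $\int\log|Df|\,d\mu_k=c$ for every $k$, and weak-$*$ approximation within the parametric family lifts this to an uncountable subfamily of ergodic, fully supported, positive-entropy Lyapunov minimizing measures. The preserved orbit $P_\star$ witnesses $\chi_{\rm sup}(f)>c\geq\chi_{\rm inf}(f)$, and the shift-like interpolations inserted at ever smaller cylinder scales produce oscillations of $\log|Df|$ that a standard modulus-of-continuity comparison rules out for any positive H\"older exponent.

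The central obstacle is the variational lower bound $\chi_{\rm inf}(f)\geq c$: a priori a limit measure could undercut the $\mu_k$'s. To address this I would arrange that each $f_n$ carries a Ma\~n\'e-type subaction inequality $\log|Df_n|-c\geq\xi_n\circ f_n-\xi_n$ for a Borel function $\xi_n$, with equality on the supports of the $\mu_k$ already controlled; the shift-like interpolation of the Realization Lemma is precisely what is needed to respect and extend such an inequality across stages. The limiting $\xi$ must be Borel but not continuous---were it continuous, Livsic's theorem would render $\phi$ cohomologous to $c$ and force $\chi_{\rm inf}=\chi_{\rm sup}$, contradicting the first bullet---and verifying that $\xi$ is $\mu_k$-integrable for every $k$ while remaining globally discontinuous is the technical heart of the argument.
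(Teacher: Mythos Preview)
Your approach is substantially different from the paper's and contains a genuine gap. The paper does not build $f$ by iterating the Realization Lemma or by hand-calibrating orbit averages; it applies the Realization Lemma \emph{once}. The symbolic problem is outsourced entirely to Theorem~1 (Shinoda's result), which already furnishes a dense set $\mathscr{C}_{\sigma_p}\subset C(\Sigma_p)$ of potentials with uncountably many ergodic, fully supported, positive-entropy minimizing measures. Since $C^2$ maps are dense in $\mathscr{E}$, the Realization Lemma pulls $\mathscr{C}_{\sigma_p}$ back to a dense set $\widehat{\mathscr A}=\{f\in\mathscr E:\log|Df|\circ\pi_f\in\mathscr C_{\sigma_p}\}$; intersecting with the open dense set $\widehat{\mathscr E}$ of maps possessing two periodic measures with distinct Lyapunov exponents gives $\mathscr A$. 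The non-H\"older claim is not engineered but deduced \emph{a posteriori}: if $\log|Df|$ were H\"older, the Ma\~n\'e--Conze--Guivarc'h lemma would make it a continuous coboundary plus the constant $\chi_{\rm inf}$, forcing $\chi_{\rm inf}=\chi_{\rm sup}$ and contradicting $f\in\widehat{\mathscr E}$.

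Your scheme, by contrast, tries to manufacture the minimizing face directly and runs into the very obstacle you flag---and more. The calibration plan is internally inconsistent: you propose to drive the Birkhoff average along \emph{every} periodic orbit (save $P_\star$) to $c$. But periodic measures with a single one removed are still weak-$*$ dense in $\mathcal M(\sigma)$, so continuity of $\mu\mapsto\int\phi\,d\mu$ would force $\int\phi\,d\mu=c$ for \emph{all} invariant $\mu$, including $\delta_{P_\star}$, contradicting $\chi_{\rm inf}\neq\chi_{\rm sup}$. If instead you calibrate only some orbits, the entire burden falls on the subaction inequality $\phi-c\geq\xi\circ\sigma-\xi$, and you give no mechanism for propagating such an inequality through successive applications of the Realization Lemma, nor for controlling the limit $\xi$ so that it is bounded (needed to conclude $\chi_{\rm inf}\geq c$ for \emph{all} invariant measures) yet discontinuous. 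The paper's route---delegating the existence of suitable $\phi$ to Theorem~1 and treating the Realization Lemma as a one-shot transfer principle---avoids all of this.
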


Theorem A has been inspired by the following result of the first-named author \cite{Shi17}
on the ergodic optimization for the subshift of finite type.
For an integer $p\geq2$ let $\Sigma_p=\{0,1,\ldots,p-1\}^{\mathbb Z_{\geq0}}$ denote the one-sided shift space
on $p$ symbols, endowed with the product topology of the discrete topology on $\{0,1,\ldots,p-1\}$.
Let $\sigma\colon\Sigma_p\circlearrowleft$ denote the left shift:
if $\underline{a}$, $\underline{b}\in\Sigma_p$, $\underline{a}=\{a_n\}_{n\geq0}$,
$\underline{b}=\{b_n\}_{n\geq0}$
and $\sigma(  \underline{a})=
\underline{b}$ then $b_n=a_{n+1}$ holds for every $n\geq0$.
For a $p\times p$ matrix $A=(A_{ij})$ whose each entry is $0$ or $1$, define
$\Sigma_A=\{\{a_n\}_{n\geq0}\in\Sigma_p\colon A_{a_na_{n+1}}=1\ \forall n\geq0\}$.
{\it The subshift of finite type} is the dynamical system
$\sigma_A\colon \Sigma_A\circlearrowleft$ 
given by $\sigma_A=\sigma|_{\Sigma_A}$.
We say $\sigma_A$ is {\it topologically mixing} if there exists an integer $M>0$
such that all the entries of $A^M$ are positive.
Let $C(\Sigma_A)$ denote the space of real-valued continuous functions on $\Sigma_A$ endowed with the supremum norm.
\begin{theoremshinoda}{\rm (\cite[Theorem A]{Shi17})}\label{shinoda}
Let $\sigma_A\colon \Sigma_A\circlearrowleft$ be a topologically mixing subshift of finite type.
There exists a dense subset $\mathscr{C}_{\sigma_A}$ of $C(\Sigma_A)$ such that for every $\phi\in\mathscr{C}_{\sigma_A}$
there exist uncountably many $\phi$-minimizing measures which are ergodic, fully supported and have positive entropy. %These measures are equilibrium states of $\sigma_A$ for 
%some H\"older continuous potentials. 
\end{theoremshinoda}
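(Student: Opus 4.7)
Fix $\psi \in C(\Sigma_A)$ and $\varepsilon>0$. The plan is to construct $\phi \in C(\Sigma_A)$ with $\|\phi-\psi\|_\infty<\varepsilon$ admitting uncountably many ergodic, fully supported, positive entropy $\phi$-minimizing measures, by building $\phi$ as a uniform limit of H\"older potentials $\phi_n$ engineered to carry progressively larger finite sets of minimizing measures. The limit $\phi$ will then attain its minimum value $\alpha(\phi)$ on a countable collection whose weak-$*$ closure contains the desired uncountable family.

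The first step is to fix a countable sequence $(\nu_k)_{k\geq1}$ of periodic orbit measures in $\mathcal M(\sigma_A)$ whose weak-$*$ closure contains uncountably many ergodic, fully supported, positive entropy invariant measures. This is possible because, by Sigmund's density theorem for topologically mixing subshifts of finite type, periodic orbit measures are weak-$*$ dense in $\mathcal M(\sigma_A)$, while the ergodic, fully supported, positive-entropy measures form a dense $G_\delta$ in $\mathcal M(\sigma_A)$. I would fix once and for all an uncountable family $\{\mu^*_\alpha\colon\alpha\in[0,1]\}$ of such target measures, and enumerate the $\nu_k$ so that every $\mu^*_\alpha$ is a weak-$*$ accumulation point of $(\nu_k)$.

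The heart of the construction is an inductive perturbation argument. Start from a locally constant approximation $\phi_0$ of $\psi$ with $\|\phi_0-\psi\|_\infty<\varepsilon/4$, and build H\"older potentials $\phi_n$ with $\|\phi_n-\phi_{n-1}\|_\infty<\varepsilon 2^{-n-1}$ such that $\nu_1,\dots,\nu_n$ are all $\phi_n$-minimizing, sharing a common integral $\alpha_n=\min_\nu\int\phi_n\,d\nu$. The inductive step requires a \emph{perturbation lemma}: given $\phi_{n-1}$ with secured minimizers $\nu_1,\dots,\nu_{n-1}$ and a new candidate $\nu_n$ whose gap $\int\phi_{n-1}\,d\nu_n-\alpha_{n-1}$ is sufficiently small, produce a H\"older correction $h_n$ with $\|h_n\|_\infty$ comparable to the gap so that $\phi_n:=\phi_{n-1}+h_n$ keeps $\nu_1,\dots,\nu_{n-1}$ minimizing and adds $\nu_n$ to the minimizing set. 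The smallness of the gap, forced by choosing $\nu_n$ in a weak-$*$ neighborhood of the current minimizing face (possible because of how $(\nu_k)$ was enumerated), is what makes the perturbations summable. Passing to the uniform limit $\phi=\lim\phi_n\in C(\Sigma_A)$, continuity of $\mu\mapsto\int\phi\,d\mu$ and $\alpha_n\to\alpha(\phi)$ give $\int\phi\,d\nu_k=\alpha(\phi)$ for every $k$; weak-$*$ closedness of $M_{\min}(\phi)$ then implies $\overline{\{\nu_k\}}\subset M_{\min}(\phi)$, so in particular the prescribed uncountable family $\{\mu^*_\alpha\}$ consists of ergodic, fully supported, positive entropy $\phi$-minimizers.

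The main obstacle is the perturbation lemma, which must simultaneously (i) keep the integrals $\int h_n\,d\nu_k$ equal for $k\leq n-1$ so as not to disturb their common value, (ii) decrease $\int h_n\,d\nu_n$ by essentially the current gap so that $\nu_n$ joins the minimizing set, and (iii) prevent any other invariant measure from dipping strictly below the updated minimum $\alpha_n$. A natural candidate for $h_n$ is a combination of bump functions localized on representative orbit segments of $\nu_n$, calibrated by explicit continuous coboundary corrections designed against the moments of $\nu_1,\dots,\nu_{n-1}$; verifying all three constraints with $\|h_n\|_\infty$ of the right order is the technical core of the argument. A final subtlety, worth flagging, is that by Bousch's subaction theorem no non-trivially cohomologous H\"older potential can admit a fully supported minimizer, so the limit $\phi$ is necessarily not H\"older; this is automatic here because the H\"older seminorms of the $h_n$ unavoidably blow up even while their sup norms are summable.
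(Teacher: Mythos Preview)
The paper does not prove this statement; it is quoted from \cite{Shi17}, and the only information the paper gives about the proof is that it ``views elements of $\mathcal M(\sigma_A)$ as duals of $C(\Sigma_A)$ and uses the Bishop--Phelps Theorem for a continuous path of ergodic Markovian measures''. Concretely, one takes a weak-$*$ continuous arc $t\mapsto\mu_t$ of Gibbs (Markov) measures---each ergodic, fully supported, of positive entropy---and uses the convex-analytic fact (Bishop--Phelps) that functionals whose supporting face in $\mathcal M(\sigma_A)$ meets this arc in uncountably many points are dense in $C(\Sigma_A)$. The uncountably many minimizing measures produced this way are equilibrium states for H\"older potentials, as the paper remarks after Theorem~A.

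Your route is genuinely different: you try to build $\phi$ by hand as a limit of H\"older perturbations, forcing a prescribed countable family of periodic measures into the minimizing set and then passing to its weak-$*$ closure. The closure step is fine, but the inductive perturbation lemma has a real gap that you flag but do not close. The obstruction is precisely your condition (iii). If $h_n$ is a non-positive bump localised on a cylinder neighbourhood $U$ of the orbit of $\nu_n$, calibrated so that $\int h_n\,d\nu_n$ equals minus the current gap $g$, then for any invariant $\mu$ one has $\int h_n\,d\mu\approx -g\,\mu(U)$; to keep $\mu$ above the new level $\alpha_n=\alpha_{n-1}$ you need $g\,\mu(U)\le \int\phi_{n-1}\,d\mu-\alpha_{n-1}$. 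There is no mechanism preventing a measure $\mu$ that shadows the orbit of $\nu_n$ on a large fraction of its time (so $\mu(U)$ is near $1$) while also shadowing an existing minimizer $\nu_j$ on the remainder (so $\int\phi_{n-1}\,d\mu-\alpha_{n-1}$ is small); for such $\mu$ the inequality fails and $\mu$ drops strictly below $\alpha_n$. Coboundary corrections do not help here, since they leave all the integrals $\int\cdot\,d\mu$ unchanged. This is exactly the difficulty the Bishop--Phelps argument circumvents: it never attempts to control the full minimizing face by explicit local surgery, but instead exploits the abstract density of support functionals with large faces, applied to an arc of measures that are \emph{already} ergodic, fully supported, and of positive entropy---so no closure-of-periodic-measures step is needed either.
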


The argument in \cite{Shi17} is to view elements of $\mathcal M(\sigma_A)$ as duals of $C(\Sigma_A)$,
and use the Bishop-Phelps Theorem \cite[Theorem V.1.1]{Isr79} for a continuous path of ergodic Markovian measures. 
Our strategy for proving Theorem A is to use the Markov partition formed by the intervals 
$\{[\alpha_i,\beta_i]\}_{i=0}^{p-1}$ to translate the Lyapunov optimization 
to the ergodic optimization for $\sigma$,
and then appeal to Theorem \ref{shinoda}.
As a result, for every $f\in\mathscr{A}$ the uncountably many minimizing measures in the statement lie on a continuous path 
of ergodic measures which are equilibrium states of $f$ for some H\"older continuous potentials.

We have suppressed small generalizations of Theorem A for the brevity of presentation.
An extension is possible to the case where $f$ is topologically conjugate
the topologically mixing subshift of finite type. Moreover, we can drop the 
conditions $0=\alpha_0$ and $\beta_{p-1}=1$.
Statements analogous to Theorem A hold for Lyapunov maximizing measures.
Since both proofs are identical, we restrict ourselves to Lyapunov minimizing ones.

For $f\in\mathscr{E}$ define a coding map $\pi_f\colon\Sigma_p\to \Lambda(f)$ by
$\pi_f(\underline{a})\in\bigcap_{n\geq0}f^{-n}([\alpha_{a_n},\beta_{a_n}])$
where $\underline{a}=\{a_n\}_{n\geq0}$.
%\pi_f(x)=\underline{a}$, where $\underline{a}=(a_k)_{k\geq0}$ is such that
%$f^n(x)\in [\alpha_{a_n},\beta_{a_n}]$ holds for every $n\geq0$.
Then
  $\pi_f$ is a homeomorphism satisfying
$\pi_f\circ\sigma= f\circ\pi_f$, and %and there is a one-to-one correspondence
%between $\mathcal M(\sigma)$ and $\mathcal M(f)$ which preserves integrals of continuous functions.
for each fixed $\phi\in C(\Sigma_p)$ there is a one-to-one correspondence
between $\phi$-minimizing measures in $\mathcal M(\sigma)$ and $\phi\circ\pi_f^{-1}$-minimizing ones 
in $\mathcal M(f)$. %Theorem A follows from Theorem \ref{shinoda} and the next proposition. 
%Our idea is to construct such an $f$ with a $C^1$ perturbation.
%Given a $C^2$ map $f_0\in   \mathscr{E}(\{\alpha_i,\beta_i\}_{i=0}^{p-1})$
%and $\phi\in C(\Sigma_p)$ which is $C^0$-close to $\log|Df_0|\circ\pi_{f_0}$, the issue is to find a
%solution $ f=f(\phi)$ of the functional equation
%$\log|D f|\circ \pi_{ f}=\phi$ with the property that
%$ f(\phi)\to f_0$ in $C^1$ as $\phi\to \log|Df_0|\circ \pi_{ f_0}$ in $C^0$.
%there exists another
% $\tilde f\in\mathscr{E}(\{\alpha_i,\beta_i\}_{i=0}^{p-1})$ which is $C^1$-close to $f$ and satisfies 
Theorem A is proved by combining Theorem 1 and the next lemma
which allows one to realize a perturbation in $C(\Sigma_p)$ as a perturbation
in $\mathscr{E}$.
%\begin{prop}\label{cauchy0}
\begin{theoremd}
Let $f_0\in \mathscr{E}$ be of class $C^2$.
For every $\varepsilon>0$ there exists a neighborhood $\mathscr U$ of
 $\log|Df_0|\circ\pi_{f_0}$ in $C(\Sigma_p)$ 
such that for every $\phi\in \mathscr{U}$ 
%there exists a Cauchy sequence $\{ f_n\}_{n\geq 0}$ in 
% $\mathscr{E}$ %and an integer $N\geq1$
  %such that
 %$\|f_0-f_{N}\|\leq\varepsilon,$ and
%for all integers $m$, $n$ with $m> n\geq N$,
%$\|f_m-f_n\|  \leq\varepsilon.$
%whose limit $f_\infty$ %of $\{ f_n\}_{n\geq 0}$ 
%belongs to $\mathscr{E}$ and satisfies
there exists $f_\infty\in\mathscr{E}$ such that
$$\|f_0-f_\infty\|_{C^1}\leq\varepsilon\ \ \text{and}\ \ \log|Df_\infty|\circ\pi_{f_\infty}=\phi.$$
 \end{theoremd}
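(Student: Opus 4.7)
The plan is to construct $f_\infty$ as the $C^1$-limit of an inductively defined sequence $(f_n)_{n\ge 0}$ in $\mathscr{E}$, at each stage matching $\phi$ on an increasingly fine Markov partition. First, approximate $\phi$ by locally constant functions: fix an increasing sequence $0=k_0<k_1<k_2<\cdots$, chosen adaptively to $\phi$, and let $\phi_n\in C(\Sigma_p)$ be the function constant on each $k_n$-cylinder $[w]$ with value $\phi(\hat w)$ at some representative $\hat w\in[w]$. Uniform continuity of $\phi$ gives $\|\phi-\phi_n\|_\infty\le\omega_\phi(2^{-k_n})\to 0$, so we may choose $k_n$ so large that $\omega_\phi(2^{-k_{n-1}})\le\varepsilon\,2^{-n}/C$ for a uniform constant $C$ to be fixed below.

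For the inductive step, assume $f_{n-1}\in\mathscr{E}$ has $|Df_{n-1}|$ constant equal to $e^{\phi_{n-1}(w)}$ on each level-$k_{n-1}$ Markov element $I_w$. I would construct $f_n$ so that it agrees with $f_{n-1}$ at the boundary of every level-$k_{n-1}$ element (preserving the coarse Markov structure), and inside each $I_w$ subdivide into $p^{k_n-k_{n-1}}$ sub-intervals $I^{(f_n)}_{ww'}$ with $|Df_n|\equiv e^{\phi_n(ww')}$ on $I^{(f_n)}_{ww'}$ and $|Df_n|$ monotonically interpolated across the gaps separating siblings. The sub-interval lengths are forced by the Markov mapping conditions $\int_{I^{(f_n)}_{ww'}}|Df_n|\,dx=|I^{(f_n)}_{\sigma(ww')}|$; the total integral of $|Df_n|$ over $I_w$ coincides with that of $|Df_{n-1}|$ by the boundary matching. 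Using $|\phi_n(ww')-\phi_{n-1}(w)|\le\omega_\phi(2^{-k_{n-1}})$ together with the uniform bound on $e^\phi$, one obtains $\|Df_n-Df_{n-1}\|_\infty\le C\,\omega_\phi(2^{-k_{n-1}})$, whence $\|f_n-f_{n-1}\|_{C^1}\le\varepsilon\,2^{-n}$. The base step $f_0\to f_1$ requires separate care: the $C^2$ hypothesis on $f_0$ bounds the oscillation of $|Df_0|$ on each level-$k_1$ element by $C\lambda^{-k_1}$, so taking $k_1$ large and $\mathscr{U}$ small enough to control $\|\phi-\phi^{(0)}\|_\infty$ gives $\|f_1-f_0\|_{C^1}\le\varepsilon/2$.

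Summing the telescopic estimates yields $\|f_\infty-f_0\|_{C^1}\le\varepsilon$, and $f_\infty\in\mathscr{E}$ by the openness of the expansion condition (E2) for $\varepsilon$ small. By construction $\log|Df_n|\circ\pi_{f_n}=\phi_n$ holds exactly at every stage, and since $f_n\to f_\infty$ in $C^1$ implies both $Df_n\to Df_\infty$ uniformly and $\pi_{f_n}\to\pi_{f_\infty}$ uniformly (by the standard structural stability of expanding Markov maps), the identity $\log|Df_\infty|\circ\pi_{f_\infty}=\phi$ emerges as the uniform limit of $\phi_n$.

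The main obstacle is the inductive step itself: one has to simultaneously choose the sub-interval lengths, assign the prescribed piecewise constant derivative values, interpolate continuously across the gaps while keeping $|Df_n|$ above the expansion threshold $c\lambda$, and achieve the $C^1$-perturbation bound $C\,\omega_\phi(2^{-k_{n-1}})$. Once that geometric bookkeeping is in place, the adaptive choice of $(k_n)$ — available because $\phi$ is \emph{merely} continuous, with no Dini or H\"older assumption — makes the perturbations summable and realizes the desired interpolation between $C(\Sigma_p)$ and $\mathscr{E}$.
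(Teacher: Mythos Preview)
Your overall strategy coincides with the paper's: build $f_\infty$ as a $C^1$-limit of maps with derivative piecewise constant on cylinders, matching $e^\phi$ at finer and finer levels, and interpolate across gaps. The two places where your sketch leaves a genuine gap are exactly the ones you flag as ``the main obstacle,'' and the paper's proof is essentially devoted to filling them.

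First, your inductive step is self-referential as written. The Markov condition $\int_{I^{(f_n)}_{ww'}}|Df_n|\,dx=|I^{(f_n)}_{\sigma(ww')}|$ has a right-hand side which is a level-$(k_n-1)$ interval of the map $f_n$ you are trying to construct; when $k_n>k_{n-1}+1$ this interval is not yet determined by the level-$k_{n-1}$ data you fixed. The paper avoids this by advancing one level at a time: at step $n$ it defines $h_n$ so that each new level-$(n{+}1)$ interval maps onto a level-$n$ interval of the \emph{previous} map $h_{n-1}$ (property (P)$_n$), which is already known. Your adaptive jumps $k_{n-1}\to k_n$ can be simulated by iterating this one-level step, but then the Cauchy estimate is not the telescoping sum you wrote; it is the paper's bound in terms of $\sup_{|w|=n}V_w(\phi)$, which tends to zero by uniform continuity of $\phi$ but need not be summable.

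Second, and more seriously, the construction can fail geometrically unless one controls the proportion of gap to cylinder at every level. When you replace $|Df_{n-1}|$ on $I_w$ by the prescribed values $e^{\phi_n(ww')}$ on sub-cylinders, the residual slope on each gap is forced, and nothing in your sketch prevents it from dropping below the expansion threshold or even becoming non-monotone (the paper calls this the ``overhang''). The paper's Lemma~3.1 is precisely this control: it shows $\Delta^{(i)}_{a_0\cdots a_{n-1}}(h_n)\le 2K(f_0)$ uniformly in $n$, where $K(f_0)$ comes from the bounded distortion of the $C^2$ map $f_0$ (Lemma~2.3). You invoke $C^2$ only to control the oscillation of $|Df_0|$ at the base step; the paper uses it, via bounded distortion, to launch a gap-proportion bound that then propagates through every inductive step and feeds into both the slope estimate on gaps (Lemma~3.2) and the $C^1$ Cauchy bound (Lemma~3.5). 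Without this, neither ``keeping $|Df_n|$ above the expansion threshold'' nor the perturbation bound $\|Df_n-Df_{n-1}\|_\infty\le C\,\omega_\phi(2^{-k_{n-1}})$ with a \emph{uniform} $C$ can be justified.
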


We finish the proof of Theorem A assuming the Realization Lemma.
\begin{proof}[Proof of Theorem A]
For the shift map $\sigma_p\colon\Sigma_p\circlearrowleft$ consider the dense subset 
$\mathscr{C}_{\sigma_p}$ of $C(\Sigma_p)$ in Theorem 1.
Since $C^2$ maps are dense in
$\mathscr{E}$, 
the Realization Lemma %Proposition \ref{cauchy0} 
implies that the set $\widehat{\mathscr{A}}=\{f\in\mathscr{E}\colon \log|Df|\circ\pi_f\in\mathscr{C}_{\sigma_p}\}$
is dense in $\mathscr{E}$.
From Theorem 1, if $f\in\widehat{\mathscr{A}}$ then there exist uncountable many Lyapunov minimizing measures which are ergodic, fully supported and have positive entropy.

Let $\widehat{\mathscr{E}}$ denote the set of elements of $\mathscr{E}$ for which there exist two periodic measures with different 
Lyapunov exponents. Clearly, if $f\in\widehat{\mathscr{E}}$ then  $\chi_{\rm inf}(f)\neq\chi_{\rm sup}(f)$.
Set $\mathscr{A}=\widehat{\mathscr{A}}\cap\widehat{\mathscr{E}}$.
The set $\mathscr{A}$ satisfies the desired properties.
Indeed, since $\widehat{\mathscr{E}}$ is an open dense subset of $\mathscr{E}$
and $\widehat{\mathscr{A}}$ is a dense subset of $\mathscr{E}$,
$\mathscr{A}$ is a dense subset of $\mathscr{E}$.
Let $f\in\mathscr{A}$ and suppose $\log|Df|$ is H\"older continuous.
From the so-called {\it Ma\~n\'e-Conze-Guivarc'h lemma}
(See \cite{Mor09} and the references therein) there exists a continuous function $u$ on $\Lambda(f)$ such that
 $\log|Df|=u-u\circ f+\chi_{\rm inf}$.
Hence $\chi_{\rm inf}(f)=\chi_{\rm sup}(f)$ holds and
 $f\notin\widehat{\mathscr{E}}$, a contradiction.
\end{proof}

For a proof of the Realization Lemma,
we construct a Cauchy sequence $\{ f_n\}_{n\geq 0}$ in 
 $\mathscr{E}$ and obtain $f_\infty$ as a limit of this sequence. 
This construction has two main steps which are carried out in Sect.3.
 First, we construct by induction a sequence of 
 continuous piecewise linear maps  $h_n\colon X\to[0,1]$
 $(n=0,1,\ldots)$.
 Then we perturb each $h_n$ and obtain
the desired sequence $\{f_n\}_{n\geq0}$.
Although the constructions of $\{h_n\}_{n\geq 0}$ and $\{f_n\}_{n\geq 0}$ are rather intuitive,
the difficulty is to ensure that the induction does not halt on the way.
We do this by showing that the sizes of gaps at each step of induction have a definite proportion
(See Lemma \ref{d-ini}).
This is the reason for the assumption of $C^2$ smoothness in the Realization Lemma.
The realization lemma is clearly false for expanding circle maps:
there is no space to absorb differences which stem from the perturbation.
Indeed, our proof exploits the total disconnectedness of the Cantor set.

The Realization Lemma %hints a parallelism between the ergodic and the Lyapunov optimization:
 implies that any property of minimizing measures
which holds for a dense subset of functions in $C(\Sigma_p)$ transmits to a dense subset of $\mathscr{E}$.
By the result of Br\'emont  \cite[Proposition 2.1]{Bre08}, for a dense subset of $C(\Sigma_p)$
  the minimizing measure is unique, and it is supported on a periodic orbit.
It follows that for a dense subset of $\mathscr{E}$ the Lyapunov minimizing measure
is unique, and it is supported on a periodic orbit.
Below we give a stronger statement which 
in particular indicates that a version of Theorem A does not hold in the $C^{1+{\rm Lip}}$ topology.

Let $\mathscr{E}_{\rm Lip}$ denote the space of maps in $\mathscr{E}$ with Lipschitz continuous derivative
endowed with the topology given by the norm %$\|\cdot\|_{\rm Lip}$.
$$\|f\|_{C^{1+{\rm Lip}}}=\|f\|_{C^1}+\sup_{\stackrel{x,y\in X}{x\neq y}}\frac{|Df(x)-Df(y)|}{|x-y|}.$$

 \begin{theoremb}
%The set 
%\begin{multline*}\mathscr{O}=\{f\in\mathscr{E}_{\rm Lip}\colon\text{the Lyapunov minimizing measure is unique}\\
%\text{and is supported on a periodic orbit}\}\end{multline*}
%is open in $\mathscr{E}_{\rm Lip}$, and
 There exists an open subset $\mathscr{O}$ of $\mathscr{E}_{\rm Lip}$
 such that for every $f\in\mathscr{O}$ there exists a unique Lyapunov minimizing measure, and 
it is supported on a periodic orbit. In addition, $\mathscr{O}$ 
is a dense subset of $C(\Sigma_p)$. 
\end{theoremb}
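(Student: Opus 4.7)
The plan is to adapt the scheme of Contreras-Lopes-Thieullen \cite{ConLopThi01} for the openness of $\mathscr{O}$, and to transmit Br\'emont's symbolic result \cite[Proposition 2.1]{Bre08} through the Realization Lemma for the density statement.

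For $f\in\mathscr{E}_{\rm Lip}$ the potential $\log|Df|$ is Lipschitz, hence H\"older, so the Ma\~n\'e-Conze-Guivarc'h lemma (already used for Theorem A) supplies a continuous $u_f$ on $\Lambda(f)$ making
$$\tilde\phi_f:=\log|Df|-u_f+u_f\circ f-\chi_{\rm inf}(f)$$
non-negative on $\Lambda(f)$, with the support of every Lyapunov minimizing measure contained in $\{\tilde\phi_f=0\}$. I would define $\mathscr{O}$ as the set of $f\in\mathscr{E}_{\rm Lip}$ for which $\{\tilde\phi_f=0\}$ is a single (hence hyperbolic) periodic orbit $P(f)$ and $\tilde\phi_f$ is uniformly bounded below outside any prescribed neighborhood of $P(f)$. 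For such $f$ the unique Lyapunov minimizing measure is the invariant measure on $P(f)$, and openness in the $C^{1+{\rm Lip}}$-topology follows from continuous dependence of hyperbolic periodic orbits on parameters, spectral-gap continuity of the transfer operator yielding continuous dependence of $u_f$ on $f$, and preservation of strict lower bounds under small perturbations.

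For the density of the image $\{\log|Df|\circ\pi_f:f\in\mathscr{O}\}$ in $C(\Sigma_p)$, given $\phi\in C(\Sigma_p)$ and $\varepsilon>0$, I would first invoke Br\'emont's result to choose $\phi'\in C(\Sigma_p)$ with $\|\phi-\phi'\|_{C^0}<\varepsilon/3$ whose $\sigma$-minimizing measure is unique and supported on a periodic orbit. Next, pick a $C^2$ map $f_0\in\mathscr{E}$ whose pullback $\log|Df_0|\circ\pi_{f_0}$ lies in the Realization-Lemma neighborhood of $\phi'$, and apply the Realization Lemma to produce $f_1\in\mathscr{E}$ with $\log|Df_1|\circ\pi_{f_1}=\phi'$. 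Finally, smooth $Df_1$ branchwise so that the resulting map $f$ lies in $\mathscr{E}_{\rm Lip}$ with $\|\log|Df|\circ\pi_f-\phi'\|_{C^0}<\varepsilon/3$. Since the property ``unique periodic minimizing measure'' is $C^0$-open in the symbolic potential (implicit in Br\'emont's construction), the openness criterion of the previous paragraph, pulled back through $\pi_f$, places $f$ in $\mathscr{O}$ and yields $\|\log|Df|\circ\pi_f-\phi\|_{C^0}<\varepsilon$.

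The hard part will be the smoothing step in the density argument: the Realization Lemma outputs only a $C^1$ map, while membership in $\mathscr{O}\subset\mathscr{E}_{\rm Lip}$ demands a Lipschitz derivative. I expect to work branch-by-branch, smoothing $Df_1$ on each Markov interval while preserving (E1) and (E2) and simultaneously controlling the induced $C^0$-shift of $\log|Df|\circ\pi_f$, again exploiting the total disconnectedness of $\Lambda(f)$ as in the proof of the Realization Lemma. The openness step, by contrast, reduces to standard hyperbolic perturbation theory once the H\"older regularity of $\log|Df|$ is in hand.
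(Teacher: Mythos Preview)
First, note that the statement contains an evident typo: ``$\mathscr{O}$ is a dense subset of $C(\Sigma_p)$'' should read ``dense subset of $\mathscr{E}$'', which is what the paper actually proves (and $\{\log|Df|\circ\pi_f:f\in\mathscr{E}\}$ cannot be dense in $C(\Sigma_p)$ anyway, since every such function satisfies $\int\phi\,d\mu>0$ for all $\mu\in\mathcal M(\sigma)$). Your density argument is therefore aimed at the wrong target.

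More seriously, the step ``the property `unique periodic minimizing measure' is $C^0$-open in the symbolic potential'' is false. By the very circle of results surrounding Br\'emont, Jenkinson and Bousch--Jenkinson, for a $C^0$-generic potential the minimizing measure is fully supported with zero entropy; hence the locus of potentials with a periodic minimizing measure, while $C^0$-dense, has empty $C^0$-interior. After your smoothing step you control only $\|\log|Df|\circ\pi_f-\phi'\|_{C^0}$, which cannot guarantee $f\in\mathscr{O}$, and the argument breaks here. (There is a further gap earlier: producing a $C^2$ map $f_0$ with $\log|Df_0|\circ\pi_{f_0}$ close to an \emph{arbitrary} $\phi'\in C(\Sigma_p)$ is not justified, and is in fact impossible for $\phi'$ outside the range described above.)

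The paper's route is quite different and avoids both issues. It sets $\mathscr{O}=\{f\in\mathscr{E}_{\rm Lip}:\log|Df|\circ\pi_f\in\bigcup_{0<\theta<1}\mathscr{O}_{(\Sigma_p,d_\theta)}\}$, where $\mathscr{O}_{(\Sigma_p,d_\theta)}$ is the open dense subset of ${\rm Lip}(\Sigma_p,d_\theta)$ supplied by Contreras \cite{Con16}; openness of $\mathscr{O}$ in $\mathscr{E}_{\rm Lip}$ then follows from the Lipschitz continuity of $\pi_f$, uniformly for $f$ near a given $f_0$. For density in $\mathscr{E}$, the paper first approximates an arbitrary map by one with \emph{locally constant} derivative, so that $\phi_0=\log|Df_0|\circ\pi_{f_0}\in{\rm Lip}(\Sigma_p,d_\theta)$ for every $\theta$; the key new ingredient is a \emph{Lipschitz Realization Lemma} showing that if $\theta<1/\sup_X|Df_0|$ and $\phi\in{\rm Lip}(\Sigma_p,d_\theta)$ is near $\phi_0$, then the realization construction already outputs $f_\infty\in\mathscr{E}_{\rm Lip}$. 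One then chooses $\phi\in\mathscr{O}_{(\Sigma_p,d_\theta)}$ close to $\phi_0$ in the Lipschitz norm. Thus the perturbation stays inside a Lipschitz class where Contreras's openness applies, and no post-hoc smoothing or $C^0$-stability argument is needed.
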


%The set $\mathscr{O}$ is an open subset of $\mathscr{E}_{\rm Lip}$.

The first statement of Theorem B is
a consequence of  the result of Contreras \cite{Con16}.
A proof of the last statement of Theorem B is briefly outlined as follows.
The total disconnectedness of the phase space implies that maps with locally constant derivative
are dense in $\mathscr{E}$ (See Lemma \ref{lc}). If $Df$ is a locally constant function on $\Lambda(f)$,
then $\log|Df|\circ\pi_f$ becomes
Lipschitz continuous with respect to the standard distance on $\Sigma_p$ of {\it any scale}.
By the Realization Lemma and the result of Contreras \cite{Con16}, 
$f$ is approximated by another for which the Lyapunov minimizing measure is unique and 
supported on a periodic orbit. Choosing a distance of sufficiently small scale
relative to the expansion rate of $f$ in (E2) we obtain the Lipschitz continuity of $Df$ (See Sect.\ref{LIP}).

%\begin{theoremb}
% Let $p\geq2$ be an integer and $\{\alpha_i\}_{i=0}^{p-1}$,
% $\{\beta_i\}_{i=0}^{p-1}$ sequences in $[0,1]$ satisfying \eqref{partition}.
%There exists a dense subset $\mathscr{B}$ of $\mathscr{E}$ such that
%for every $f\in\mathscr{ B}$, $Df$ is H\"older continuous, and
%there exists a unique Lyapunov minimizing measure,
%and it is supported on a single periodic orbit.\end{theoremb}

%\begin{proof}[Proof of Theorem B]
%For the shift map $\sigma_p\colon\Sigma_p\circlearrowleft$ consider the dense subset 
%$\mathscr{D}_{\sigma_p}$ of $C(\Sigma_p)$ in Theorem 2.
%From the Realization Lemma, the set $\{f\in\mathscr{E}\colon \log|Df|\circ\pi_f\in\mathscr{D}_{\sigma_p}\}$
%is dense in $\mathscr{E}$.
%Theorem B now follows Theorem 2.
%\end{proof}
%Let $f_0\in\mathscr{E}(\{\alpha_i,\beta_i\}_{i=0}^{p-1})$ be of class $C^2$.
%By Theorem \ref{bremont}, $\log|Df_0|\circ\pi_{f_0}$ is $C^0$ approximated by another $\phi\in C(\Sigma_p)$
%for which there exists a unique $\phi$-minimizing measure, and it is supported on a single periodic orbit.
%By Proposition \ref{cauchy0}, there exists $f(\phi)\in\mathscr{E}(\{\alpha_i,\beta_i\}_{i=0}^{p-1})$
%such that $\log|Df(\phi)|\circ\pi_{f(\phi)}=\phi$ holds.
%Moreover, $f(\phi)$ can be made arbitrarily $C^1$ close to $f_0$ by choosing $\phi$ sufficiently $C^0$ close
%to $\log|Df_0|\circ\pi_{f_0}$.
%It follows that $f_0$ is $C^1$ approximated by another for which 
%there exists a unique Lyapunov minimizing measure, and it is supported on a single periodic orbit.

Theorem B has one important consequence on {\it the zero-temperature limit} in the thermodynamic formalism
(See e.g. \cite{BarLepLop13} and the references therein).
For $f\in\mathscr{E}$
define a geometric pressure function $t\in\mathbb R\mapsto \mathscr{P}(t)$ by
$$\mathscr{P}(t)=\sup\left\{h_\mu(f)-t\int \log|Df| d\mu\colon\mu\in\mathcal M(f)\right\},$$
where $h_\mu(f)$ denotes the Kolmogorov-Sina{\u\i} entropy of $(f,\mu)$.
 {\it An equilibrium state for the potential $-t\log|Df|$} is a measure in $\mathcal M(f)$ which attains this supremum.
If $\log|Df|$ is H\"older continuous, then $t\in\mathbb R\mapsto \mathscr P(t)$ is real-analytic, and for every $t\in\mathbb R$ there exists a unique
equilibrium state  for the potential $-t\log|Df|$ \cite{Bow75,Rue04}, which we denote by $\mu_t$.
Lyapunov minimizing measures are obtained by freezing the system: 
%An accumulation point of $\{\mu_t\}_{t\in\mathbb R}$ as $t\to+\infty$ is called {\it a ground state}.
any accumulation point of $\{\mu_t\}_{t\in\mathbb R}$ as $t\to+\infty$ is a Lyapunov minimizing measure.
By {\it the zero-temperature limit} we mean the weak* limit of $\{\mu_t\}_{t\in\mathbb R}$
as $t\to+\infty$.
%A natural question to ask is the behavior of $\mu_t$ as $t\to+\infty$.
The uniqueness of the Lyapunov minimizing measure implies the existence of the zero-temperature limit.
\begin{cor3}
 For every $f\in\mathscr{O}$ the zero-temperature limit $\displaystyle{\lim_{t\to+\infty}\mu_t}$
exists, and is supported on a periodic orbit.
\end{cor3}
If $\log|Df|$ is a locally constant function on $\Lambda(f)$, then the zero-temperature limit exists, see Br\'emont \cite{Bre03} and 
Leplaideur \cite{Lep05}.
Such maps are dense in $\mathscr{E}$ (See Lemma \ref{lc}).
The non-existence of zero-temperature limit was treated by Chazottes and Hochman
\cite{ChaHoc10}, Coronel and Rivera-Letelier \cite{CorRiv15} in the context of the subshift of finite type.

The rest of this paper consists of three sections.
Sect.2 and Sect.3 are entirely dedicated to a proof of the Realization Lemma.
In Sect.4 we prove Theorem B.

\section{Preliminaries}
In this section we develop fundamental estimates needed for the proofs of the main results.

\subsection{Control of variations}
For an integer $n\geq1$,
by {\it a word of length $n$} we mean an $n$-string of integers in $\{0,1,\ldots,p-1\}$.
For each integer $n\geq1$ let $W_n$ denote the set of words of length $n$. 
For $a_0\cdots a_{n-1}\in W_n$ define
$$[a_0\cdots a_{n-1}]=\{\{b_k\}_{k\geq0}\in\Sigma_p\colon a_k=b_k\text{ for every $k\in\{0,1,\ldots,n-1\}$}\}.$$
For $\phi\in C(\Sigma_p)$ and $a_0\cdots a_{n-1}\in W_n$ define
$$E_{a_0\cdots a_{n-1}}(\phi)=\exp\left(\min\{\phi(\underline{a})\colon\underline{a}\in[a_0\cdots a_{n-1}]\}\right),$$
$$F_{a_0\cdots a_{n-1}}(\phi)=\exp\left(\max\{\phi(\underline{a})\colon\underline{a}\in[a_0\cdots a_{n-1}]\}\right)$$
%If $\underline{a}=(a_n)_{n\geq0}$ 
%then $E_{\rm inf}(\phi,a_0a_1\cdots a_{n-1})$ (resp. $E_{\rm sup}(\phi,a_0a_1\cdots a_{n-1})$) is monotone increasing (resp. decreasing) in $n$,
%and converges to $e^{\phi(\underline{a})}$
%as $n\to\infty$.
and 
$$V_{a_0\cdots a_{n-1}}(\phi)=F_{a_0\cdots a_{n-1}}(\phi)-E_{a_0\cdots a_{n-1}}(\phi).$$
Set $$M(\phi)=\sup_{\underline{a}\in \Sigma_p}|\phi(\underline{a})|.$$

\begin{lemma}\label{variation}
For every $\phi,\phi'\in C(\Sigma_p)$, every $n\geq1$ and every $a_0\cdots a_{n-1}\in W_{n}$ the following holds:
$$|E_{a_0\cdots a_{n-1}}(\phi)-E_{a_0\cdots a_{n-1}}(\phi')|\leq V_{a_0\cdots a_{n-1}}(\phi)+M(e^{\phi}-e^{\phi'});$$
$$|F_{a_0\cdots a_{n-1}}(\phi)-F_{a_0\cdots a_{n-1}}(\phi')|\leq V_{a_0\cdots a_{n-1}}(\phi)+M(e^{\phi}-e^{\phi'}).$$
\end{lemma}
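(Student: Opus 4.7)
The plan is to prove the bound for $E_{a_0\cdots a_{n-1}}$; the bound for $F_{a_0\cdots a_{n-1}}$ will follow by an entirely symmetric argument with maxima in place of minima. Throughout I would abbreviate the cylinder by $C=[a_0\cdots a_{n-1}]$ and drop the subscripts on $E$, $F$, $V$.

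The key idea is a single-test-point comparison. First I would pick $\underline{b}^{*}\in C$ attaining $\min_{C}\phi'$, so that $E(\phi')=e^{\phi'(\underline{b}^{*})}$. Since $\underline{b}^{*}\in C$, the value $e^{\phi(\underline{b}^{*})}$ lies between $E(\phi)$ and $F(\phi)=E(\phi)+V(\phi)$, giving the sandwich
$$E(\phi)\leq e^{\phi(\underline{b}^{*})}\leq E(\phi)+V(\phi).$$
This reduces the comparison of the two cylinder-minima to the comparison of $e^{\phi}$ and $e^{\phi'}$ at the single point $\underline{b}^{*}$, at the cost of a $V(\phi)$-slack.

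Next I would split by sign. If $E(\phi')\geq E(\phi)$, the upper half of the sandwich gives
$$E(\phi')-E(\phi)\leq e^{\phi'(\underline{b}^{*})}-e^{\phi(\underline{b}^{*})}+V(\phi)\leq M(e^{\phi}-e^{\phi'})+V(\phi).$$
If instead $E(\phi)\geq E(\phi')$, the lower half of the sandwich already suffices:
$$E(\phi)-E(\phi')\leq e^{\phi(\underline{b}^{*})}-e^{\phi'(\underline{b}^{*})}\leq M(e^{\phi}-e^{\phi'}).$$
Combining the two cases yields the desired inequality for $E$. For $F$, I would repeat the argument with $\underline{b}^{*}$ replaced by a maximizer of $\phi'$ on $C$ and the sandwich replaced by $F(\phi)-V(\phi)\leq e^{\phi(\underline{b}^{*})}\leq F(\phi)$.

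There is no genuine obstacle in this proof. The only subtlety worth flagging is the asymmetric role of the test point: $\underline{b}^{*}$ is attached to $\phi'$, yet it is $V(\phi)$, not $V(\phi')$, that appears in the bound, and this is precisely what forces the sandwich to be written for $\phi$ rather than for $\phi'$.
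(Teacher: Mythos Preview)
Your argument is correct and essentially the same as the paper's: both insert the point $\underline{b}^{*}$ (the minimizer of $\phi'$ on the cylinder) and split $|E(\phi)-E(\phi')|$ into a $\phi$-oscillation term bounded by $V(\phi)$ and a pointwise term bounded by $M(e^{\phi}-e^{\phi'})$. The only cosmetic difference is that the paper handles both signs at once via the triangle inequality $|E(\phi)-E(\phi')|\leq |e^{\phi(\underline{a})}-e^{\phi(\underline{b}^{*})}|+|e^{\phi(\underline{b}^{*})}-e^{\phi'(\underline{b}^{*})}|$ (with $\underline{a}$ a minimizer of $\phi$), whereas you split into two cases; your case split even recovers a slightly sharper bound when $E(\phi)\geq E(\phi')$, but this refinement is not needed.
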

\begin{proof}
Let $\underline{a},\underline{a'}\in [a_0\cdots a_{n-1}]$ be such that $E_{a_0\cdots a_{n-1}}(\phi)=e^{\phi(\underline{a})}$
and $E_{a_0\cdots a_{n-1}}(\phi')=e^{\phi'(\underline{a'})}$.
Then
\begin{align*}|E_{a_0\cdots a_{n-1}}(\phi)-E_{a_0\cdots a_{n-1}}(\phi')|&\leq |e^{\phi(\underline{a})}-e^{\phi(\underline{a'})}|+|e^{\phi(\underline{a}')}-e^{\phi'(\underline{a}')}|\\
&\leq  V_{a_0\cdots a_{n-1}}(\phi)+M(e^{\phi}-e^{\phi'}).\end{align*}
A proof of the second inequality is analogous.
\end{proof}

\subsection{Uniform expansion for nearby maps}\label{control}
The next lemma ensures that $\mathscr{E}$ is an open subset of $C^1$ functions on $X$.

\begin{lemma}\label{standard}
Let  $f_0\in \mathscr{E}$.
There exist constants
$\varepsilon_0>0$, $c_0>0$, $\lambda_0>1$ such that the following holds:
 if $f\in\mathscr{E}$
and $\|f_0-f\|_{C^1}\leq\varepsilon_0$, then 
for every $x\in X$ and every $n\geq1$ such that $x,f(x),\ldots,f^{n-1}(x)\in X$,
 $|Df^n(x)|\geq c_0\lambda_0^n$.
\end{lemma}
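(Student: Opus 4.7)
The plan is to promote the uniform expansion (E2) for $f_0$ to a uniform expansion valid for every nearby $f\in\mathscr{E}$, by first establishing a finite-time estimate $|Df^{m_0}|\geq 2$ on each of the finitely many length-$m_0$ cylinders, and then bootstrapping to arbitrary iterates via a trivial lower bound on $|Df|$ for the tail of length less than $m_0$.

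First I would apply (E2) for $f_0$ to select $m_0\geq 1$ with $c\lambda^{m_0}\geq 4$, so that $|Df_0^{m_0}(y)|\geq 4$ whenever $y,f_0(y),\ldots,f_0^{m_0-1}(y)\in X$. For each word $w=a_0\cdots a_{m_0-1}\in W_{m_0}$ and each $f\in\mathscr{E}$ set $J_w^f=\bigcap_{k=0}^{m_0-1}f^{-k}([\alpha_{a_k},\beta_{a_k}])$, a closed subinterval of $[\alpha_{a_0},\beta_{a_0}]$. Since $f|_{[\alpha_a,\beta_a]}$ is a $C^1$ diffeomorphism onto $[0,1]$, its derivative is bounded away from $0$ on the compact interval $[\alpha_a,\beta_a]$, and the inverse branch $\phi_{a,f}:=(f|_{[\alpha_a,\beta_a]})^{-1}$ depends continuously on $f$ in the $C^1$ topology. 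Writing
\[
J_w^f=\phi_{a_0,f}\circ\phi_{a_1,f}\circ\cdots\circ\phi_{a_{m_0-2},f}\bigl([\alpha_{a_{m_0-1}},\beta_{a_{m_0-1}}]\bigr)
\]
shows that $J_w^f$ varies continuously with $f$ in Hausdorff distance and that $Df^{m_0}|_{J_w^f}$ varies continuously in sup norm. Hence $G_w(f):=\min\{|Df^{m_0}(y)|:y\in J_w^f\}$ is continuous at $f_0$ with $G_w(f_0)\geq 4$, and finiteness of $W_{m_0}$ yields $\varepsilon_0>0$ such that $\|f-f_0\|_{C^1}\leq\varepsilon_0$ forces $G_w(f)\geq 2$ for every $w$; equivalently, $|Df^{m_0}(y)|\geq 2$ whenever $y,f(y),\ldots,f^{m_0-1}(y)\in X$.

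To finish, I would shrink $\varepsilon_0$ so that also $\delta:=\inf_X|Df|\geq\tfrac{1}{2}\inf_X|Df_0|>0$ for every admissible $f$, and for $x$ with $x,f(x),\ldots,f^{n-1}(x)\in X$ write $n=qm_0+r$ with $0\leq r<m_0$. Grouping the $q$ blocks and bounding the tail by $\delta^r\geq\min(1,\delta^{m_0})$ gives $|Df^n(x)|\geq c_0\lambda_0^n$ with $\lambda_0=2^{1/m_0}>1$ and $c_0=\tfrac{1}{2}\min(1,\delta^{m_0})$. The main obstacle is the uniform-in-$w$ continuity of $f\mapsto G_w(f)$; it is crucial that $m_0$ was fixed once $f_0$ was given, so that the iterated inverse-branch argument yields continuity without recourse to bounded distortion, and in particular no $C^2$ smoothness is required anywhere in the argument.
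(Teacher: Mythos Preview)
Your proof is correct and follows essentially the same route as the paper: fix a block length $m_0$ with $|Df_0^{m_0}|\geq 4$, use $C^1$ continuity to get $|Df^{m_0}|\geq 2$ for nearby $f$, and split an arbitrary orbit into blocks of length $m_0$ plus a short tail, obtaining $\lambda_0=2^{1/m_0}$. The paper chooses threshold $3$ rather than $4$ and bounds the tail via $\min_{1\leq l\leq N_0-1}\inf|Df_0^{l}|$ instead of your $\delta^{m_0}$ with $\delta=\inf_X|Df|$, but these are cosmetic; your inverse-branch justification of the continuity step is more explicit than the paper's, which simply asserts the existence of $\varepsilon_0$.
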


\begin{proof}
By (E2)
there exist constants  $c>0$, $\lambda>1$ such that
if $x\in X$ and $n\geq1$ are such that $x,f_0(x),\ldots,f_0^{n-1}(x)\in X$
then $|Df_0^n(x)|\geq c\lambda^n$.
Choose an integer $N_0\geq1$ such that $c\lambda^{N_0}\geq3$.
Choose $\varepsilon_0>0$
such that
$|Df^{N_0}(x)|\geq2$ holds for every 
  $f\in\mathscr E$ satisfying  $\|f_0-f\|_{C^1}\leq\varepsilon_0$ and
  $x\in X$ such that $x,f(x),\ldots f^{N_0-1}(x)\in X$. 
For such an $f$, let $x\in X$ and $n\geq1$ be such that
$x,f(x),\ldots,f^{n-1}(x)\in X$.
Write $n=N_0k+l$ where $k,l$ are nonnegative integers with $0\leq l\leq N_0-1$.
Put $$c_0=\frac{1}{2}\min_{\stackrel{y\in \bigcap_{k=0}^{l-1}f_0^{-k}(X)}{1\leq l\leq N_0-1}}|Df_0^l(y)|
\ \text{ and }\ \lambda_0=2^{\frac{1}{N_0}}.$$
Splitting the orbit of $x$ into a concatenation of segments of length $N_0$ and then using the Chain Rule gives
$|Df^n(x)|\geq c_0\lambda_0^n.$
\end{proof}

%\subsection{Main proposition}
%The next proposition readily implies Proposition \ref{cauchy0}.
%\begin{prop}\label{cauchy}
%Let %$p\geq2$ be an integer, $\{\alpha_i\}_{i=0}^{p-1}$,
% $\{\beta_i\}_{i=0}^{p-1}$ sequences in $[0,1]$ satisfying \eqref{partition}, and
% $f_0\in \mathscr{E}(\{\alpha_i,\beta_i\}_{i=0}^{p-1})$ be of class $C^2$.
%For every $\varepsilon>0$ there exist a neighborhood $\mathscr U$ of $\log|Df_0|
%\circ\pi_{f_0}$ in $C(\Sigma_p)$ 
%such that
 % for every $\phi\in \mathscr{U}$ 
 %there exist an integer $N\geq1$, a contant $C(\phi,f_0)>0$ and a sequence $\{ f_n\}_{n\geq 0}$ in 
 %$\mathscr{E}(\{\alpha_i,\beta_i\}_{i=0}^{p-1})$ such that
% $$\|f_0-f_{N}\|\leq\varepsilon,$$ and
%for all integers $m$, $n$ with $m> n\geq N$,
%$$\|f_m-f_n\|  \leq\sup_{a_1\cdots a_{n}\in\mathcal W_{n}}|(a_1\cdots a_{n};f_n)|+
%C(\phi,f_0)\sup_{a_0\cdots a_{n}\in\mathcal W_{n+1}} 
%V\left(\phi,a_0\cdots a_{n}\right)\leq\varepsilon.$$
%Moreover, $\{ f_n\}_{n\geq 0}$ is a Cauchy sequence in the $C^1$ topology, and
% its limit $f_\infty$ belongs to $\mathscr{E}(\{\alpha_i,\beta_i\}_{i=0}^{p-1})$ and satisfies
%$$\|f_0-f_\infty\|\leq\varepsilon\ \ \text{and}\ \ \log|Df_\infty|\circ\pi_{f_\infty}=\phi.$$
 %\end{prop}
 
 \subsection{Consequence of bounded distortion}
  Let $a_0\cdots a_{n-1}\in W_n$ and $f\colon X\to[0,1]$ a continuous map. Define 
$$X_{a_0\cdots a_{n-1}}(f)=\{x\in X\colon f^k(x)\in [\alpha_{a_k},\beta_{a_k}]\text{ for every }k\in\{0,1,\ldots,n-1\}\}.$$
For each $i\in\{1,2,\ldots, p-1\}$ define
$$X_{a_0\cdots a_{n-1}\Box_i}(f)=\{x\in  X_{a_0\cdots a_{n-1}}(f)\colon f^n(x)\in(\beta_{i-1},\alpha_{i})   \}$$
and
$$X_{a_0\cdots a_{n-1}\widehat i}(f)=\{x\in  X_{a_0\cdots a_{n-1}}(f)\colon f^n(x)\in[\alpha_{i-1},\beta_{i}]   \}.$$
Denote by $|I|$ the length of a bounded interval $I$. Note that 
$$\left|X_{a_0\cdots a_{n-1}\widehat i}(f)\right|=\left|X_{a_0\cdots a_{n-1}(i-1)}(f)\right|+
\left|X_{a_0\cdots a_{n-1}\Box_i}(f)\right|+\left|X_{a_0\cdots a_{n-1}i}(f)\right|.$$
 Put
$$\Delta^{(i)}_{a_0\cdots a_{n-1}}(f)=\frac{\left|X_{a_0\cdots a_{n-1}\widehat i}(f)\right|}
{        \left|X_{a_0\cdots a_{n-1}\Box_i}(f)\right|           }-1.$$

\begin{lemma}\label{distortion}
Let  $f_0\in \mathscr{E}$ be of class $C^2$.
There exists a constant $K(f_0)>0$
such that for every $i\in\{1,2,\ldots,p-1\}$, every $n\geq1$ and every $a_0\cdots a_{n-1}\in W_n$, 
$$\Delta^{(i)}_{a_0\cdots a_{n-1}}(f_0)\leq K(f_0).$$
\end{lemma}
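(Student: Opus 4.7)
\medskip

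\noindent\textbf{Proof proposal for Lemma \ref{distortion}.}

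The plan is to deduce the bound from the classical bounded-distortion property of $C^2$ expanding Markov maps, followed by the observation that the image ratio $|[\alpha_{i-1},\beta_i]|/|(\beta_{i-1},\alpha_i)|$ is a finite constant independent of $n$ and the chosen word. First I would note that by (E1), for every $n\geq 1$ and every $a_0\cdots a_{n-1}\in W_n$, the set $X_{a_0\cdots a_{n-1}}(f_0)$ is an interval contained in $[\alpha_{a_0},\beta_{a_0}]$ and the restriction $f_0^n\colon X_{a_0\cdots a_{n-1}}(f_0)\to[0,1]$ is a $C^2$ diffeomorphism. In particular $X_{a_0\cdots a_{n-1}\widehat i}(f_0)$ and $X_{a_0\cdots a_{n-1}\Box_i}(f_0)$ are the preimages under this diffeomorphism of $[\alpha_{i-1},\beta_i]$ and $(\beta_{i-1},\alpha_i)$ respectively.

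Next I would establish a uniform bounded-distortion estimate: there exists $D=D(f_0)>0$ such that for every $n\geq 1$, every $a_0\cdots a_{n-1}\in W_n$ and all $x,y\in X_{a_0\cdots a_{n-1}}(f_0)$,
\begin{equation*}
\left|\log|Df_0^n(x)|-\log|Df_0^n(y)|\right|\leq D.
\end{equation*}
For this, I write the telescoping sum
\begin{equation*}
\log|Df_0^n(x)|-\log|Df_0^n(y)|=\sum_{k=0}^{n-1}\bigl(\log|Df_0(f_0^k(x))|-\log|Df_0(f_0^k(y))|\bigr).
\end{equation*}
Since $f_0$ is $C^2$ and $|Df_0|$ is bounded away from $0$ on $X$ by (E2), the function $\log|Df_0|$ is Lipschitz on $X$ with some constant $L=L(f_0)$. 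Moreover, applying (E2) to the inverse branches shows $|f_0^k(x)-f_0^k(y)|\leq c^{-1}\lambda^{-(n-k)}|f_0^n(x)-f_0^n(y)|\leq c^{-1}\lambda^{-(n-k)}$. Summing the geometric series yields the uniform bound with $D=Lc^{-1}\lambda/(\lambda-1)$.

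From bounded distortion, for any two subintervals $I_1,I_2\subset X_{a_0\cdots a_{n-1}}(f_0)$ the mean-value theorem gives
\begin{equation*}
\frac{|I_1|}{|I_2|}\leq e^{D}\cdot\frac{|f_0^n(I_1)|}{|f_0^n(I_2)|}.
\end{equation*}
Applying this with $I_1=X_{a_0\cdots a_{n-1}\widehat i}(f_0)$ and $I_2=X_{a_0\cdots a_{n-1}\Box_i}(f_0)$ and using $f_0^n(I_1)=[\alpha_{i-1},\beta_i]$, $f_0^n(I_2)=(\beta_{i-1},\alpha_i)$ yields
\begin{equation*}
\Delta^{(i)}_{a_0\cdots a_{n-1}}(f_0)+1\leq e^{D}\cdot\frac{\beta_i-\alpha_{i-1}}{\alpha_i-\beta_{i-1}},
\end{equation*}
so taking
\begin{equation*}
K(f_0)=e^{D}\cdot\max_{1\leq i\leq p-1}\frac{\beta_i-\alpha_{i-1}}{\alpha_i-\beta_{i-1}}-1
\end{equation*}
finishes the proof.

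The only nontrivial step is the bounded-distortion estimate; this is where $C^2$ smoothness enters (to secure Lipschitz regularity of $\log|Df_0|$) together with the uniform expansion (E2) (to get geometric contraction of preimages and hence a summable series). Everything else is a direct application of (E1) and elementary geometry of the Markov partition.
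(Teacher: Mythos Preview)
Your proof is correct and follows essentially the same approach as the paper: both use the bounded-distortion property of $C^2$ expanding Markov maps to compare lengths of preimage intervals via the lengths of their images under $f_0^n$. The only differences are cosmetic---the paper states bounded distortion without derivation and routes the final estimate through ratios with the full cylinder $X_{a_0\cdots a_{n-1}}(f_0)$, whereas you derive the distortion constant explicitly and compare $X_{a_0\cdots a_{n-1}\widehat i}(f_0)$ to $X_{a_0\cdots a_{n-1}\Box_i}(f_0)$ directly.
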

\begin{proof}
Since $f_0\in \mathscr{E}$ is of class $C^2$
the bounded distortion holds:
there exists $M_0>1$ 
such that
for every $n\geq1$, every $a_0\cdots a_{n-1}\in W_n$ 
and every $x,y\in X_{a_0\cdots a_{n-1}}(f_0)$ we have $$\frac{|Df_0^n(x)|}{|Df_0^n(y)|}\leq M_0.$$ 
Let $i\in\{1,2,\ldots,p-1\}$. For every $a_0\cdots a_{n-1}\in W_n$ 
we have
$$\frac{\left|X_{a_0\cdots a_{n-1}\Box_i}(f_0)\right|}{|X_{a_0\cdots a_{n-1}}(f_0)|}\geq M_0^{-1}
\frac{\left|f_0^n(X_{a_0\cdots a_{n-1}\Box_i}(f_0))\right|}
{\left|f_0^n(X_{a_0\cdots a_{n-1}}(f_0))\right|}=M_0^{-1}(\alpha_i-\beta_{i-1}),$$
and therefore 
\begin{align*}
\frac{|X_{a_0\cdots a_{n-1}(i-1)}(f_0)|+
|X_{a_0\cdots a_{n-1}i}(f_0)|}{|X_{a_0\cdots a_{n-1}}(f_0)|}&\leq\frac{|X_{a_0\cdots a_{n-1}}(f_0)|-
|X_{a_0\cdots a_{n-1}\Box_i}(f_0)|}{|X_{a_0\cdots a_{n-1}}(f_0)|}\\
&\leq 1-M_0^{-1}(\alpha_i-\beta_{i-1}).\end{align*}
Put $K(f_0)= \max_{i\in\{1,2,\ldots,p-1\}}\frac{1-M_0^{-1}(\alpha_i-\beta_{i-1})}{M_0^{-1}(\alpha_i-\beta_{i-1})}.$
These two inequalities yield the desired one.
\end{proof}

\section{On the proof of the Realization Lemma}
In this section we complete the proof of the Realization Lemma.
Throughout this section,
let
 $f_0\in \mathscr{E}$ be of class $C^2$ and
 put $\phi_0=\log|Df_0|\circ\pi_{f_0}$.
%In Sect.\ref{PL} we begin by inductively constructing a sequence of 
%continuous piecewise linear maps.
%We will construct the desired $C^1$ Cauchy sequence by perturbing each $h_n$ in the $C^0$ topology.
%To this end, in Sect.\ref{esti} we estimate the variations of derivatives of maps in $\{h_n\}_{n\geq0}$.
%In Sect.\ref{end} we perturb each $h_n$ and construct a sequence $\{f_n\}_{n\geq0}$ 
%in $\mathscr E$ with the desired properties.

\subsection{Construction of a sequence of continuous piecewise linear maps}\label{PL}
Let $\phi\in C(\Sigma_p)$ be sufficiently close to $\phi_0$ and $N\geq2$ an integer.
We construct by induction a sequence $\{h_n(\phi,N)\}_{n\geq0}$ of continuous piecewise linear maps on $X$
which maps each connected component of $X$ bijectively onto $[0,1]$.
In what follows we will write $h_n$ for $h_n(\phi,N)$.
%Given two words $w=a_0a_1\cdots a_{m-1}\in\mathcal W_m$ and 
%$w'=a_0'a_1'\cdots a_{n-1}'\inW_n$,
%the word $a_0\cdots a_{m-1}a_0'\cdots a_{n-1}'$ of length $m+n$ obtained
%by concatenating $w$ and $w'$ is denoted by $ww'$.

\begin{figure}
\begin{center}
\includegraphics[height=7cm,width=8.3cm]{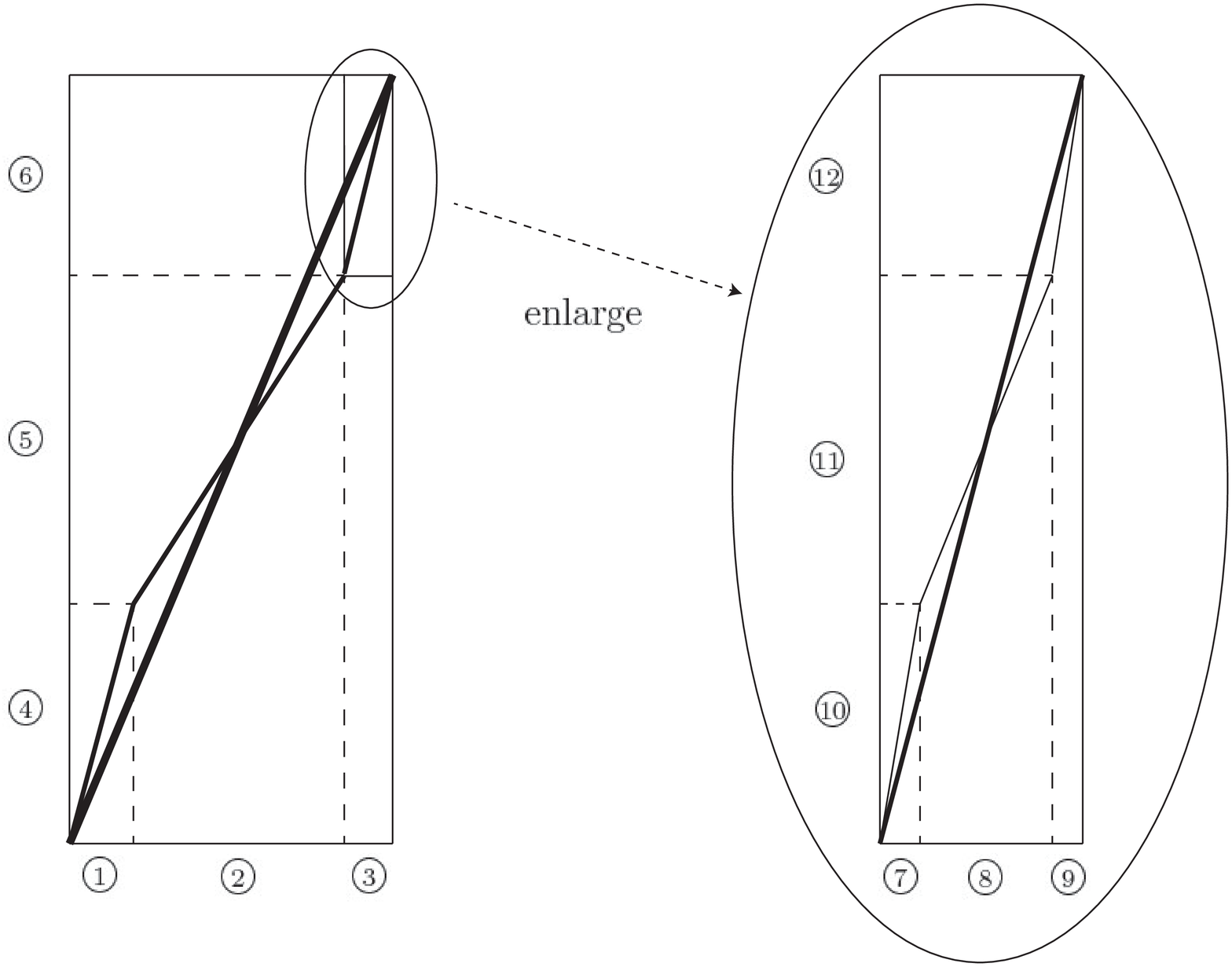}
\caption{The case $p=2$ and $n\geq N+1$ with $a_0\cdots a_{n-1}\in W_{n}$~: 
the construction of $h_{n}|_{X_{a_0\cdots a_{n-1}}(h_{n-1})}$ from $h_{n-1}|_{X_{a_0\cdots a_{n-1}}(h_{n-1})}$, and 
that of $h_{n+1}|_{X_{a_0\cdots a_{n-1}1}(h_{n})}$ from $h_{n}|_{X_{a_0\cdots a_{n-1}1}(h_{n})}$ (in the ellipses).
The left rectangle is $X_{a_0\cdots a_{n-1}}(h_{n-1})\times X_{a_1\cdots a_{n-1}}(h_{n-1})\subset
X\times[0,1]$.
Its diagonal is the graph of $h_{n-1}|_{X_{a_0\cdots a_{n-1}}(h_{n-1})}$, and
the polygonal segment is the graph of $h_{n}|_{X_{a_0\cdots a_{n-1}}(h_{n-1})}$.
The rectangle in the enlarged ellipse is $X_{a_0\cdots a_{n-1}1}(h_{n})\times X_{a_1\cdots a_{n-1}1}(h_{n})$.
Its diagonal is the graph of $h_{n}|_{X_{a_0\cdots a_{n-1}1}(h_{n})}$, and
the polygonal segment is the graph of $h_{n+1}|_{X_{a_0\cdots a_{n-1}1}(h_{n})}$.
The labelled segments are as follows: % ({\bf align these}):
$$\textcircled{\scriptsize1}=X_{a_0\cdots a_{n-1}0}(h_{n});\textcircled{\scriptsize2}=
X_{a_0\cdots a_{n-1}\Box_1}(h_{n}); \textcircled{\scriptsize3}=X_{a_0\cdots a_{n-1}1}(h_{n});$$
$$\textcircled{\scriptsize4}=X_{a_1\cdots a_{n-1}0}(h_{n-1}); 
\textcircled{\scriptsize5}=X_{a_1\cdots a_{n-1}\Box_1}(h_{n-1}); \textcircled{\scriptsize6}=
X_{a_1\cdots a_{n-1}1}(h_{n-1});$$
$$\textcircled{\scriptsize7}=X_{a_0\cdots a_{n-1}10}(h_{n+1}); \textcircled{\scriptsize8}=
X_{a_0\cdots a_{n-1}1\Box_1}(h_{n+1}); 
\textcircled{\scriptsize9}=X_{a_0\cdots a_{n-1}11}(h_{n+1});$$
$$\textcircled{\scriptsize10}=X_{a_1\cdots a_{n-1}10}(h_{n}); 
\textcircled{\scriptsize11}=X_{a_1\cdots a_{n-1}1\Box_1}(h_{n}); 
\textcircled{\scriptsize12}=X_{a_1\cdots a_{n-1}11}(h_{n}).$$
}
\end{center}
\end{figure}

Start with $h_n=f_0$ for every $n\in\{0,1,\ldots,N-1\}$.
Let $n\geq N$ and suppose $h_{n-1},h_{n-2}$ has been defined
so that the following holds:
\begin{itemize}
\item[(P$)_{n-1}$:] $h_{n-1}(X_{a_0\cdots a_{n-1}}(h_{n-1}))=X_{a_1\cdots a_{n-1}}(h_{n-2})$  for every $a_0\cdots a_{n-1}\in W_{n}$:
\end{itemize}
Put 
%$G_n=\bigcup_{k=1}^{n-1}\bigcup_{a_0\cdots a_k\in\mathcal W_{k+1}}\bigcup_{i=1}^{p-1} (a_0\cdots a_{k}\Box_i;h_n)$.
$$G_{n-1}=\bigcup_{k=1}^{n-1}\bigcup_{a_0\cdots a_{k-1}\in W_{k}}\bigcup_{i=1}^{p-1} X_{a_0\cdots a_{k-1}\Box_i}(h_{n-1}).$$
The open intervals $X_{a_0\cdots a_{k-1}\Box_i}(h_{n-1})$ in the union are called {\it a gap of of order $k$}.
The total number of gaps of order $k$ is $p^{k}(p-1)$.
Note that
$$X=G_{n-1}\bigcup\left(\bigcup_{a_0\cdots a_{n-1}\in W_{n}}
X_{a_0\cdots a_{n-1}}(h_{n-1})\right),$$ where all unions are disjoint.
%These are taken as assumptions of induction.
Define $h_{n}$ so that the following holds  (See FIGURE 1):

\begin{itemize}
\item[(i)] $h_{n}\equiv h_{n-1}$ on
$G_{n-1}$;
\end{itemize}
For each $a_0\cdots a_{n-1}\in W_{n}$,
$h_{n}|_{X_{a_0\cdots a_{n-1}}(h_{n-1})}$ is defined as follows:
\begin{itemize}
\item[(ii)] for every $i\in\{0,1,\ldots,p-1\}$, $$h_{n}^{-1}(X_{a_1\cdots a_{n-1}i}(h_{n-1}))\cap
 X_{a_{0}\cdots a_{n-1}}(h_{n-1})=X_{a_{0}\cdots a_{n-1}i}(h_{n})$$
and
$$Dh_{n}|_{X_{a_{0}\cdots a_{n-1}i}(h_{n})}\equiv \pm E_{a_{0}\cdots a_{n-1}i}(\phi),$$ 
where the sign is $+$ if $Df_0>0$ on $[\alpha_{a_0},\beta_{a_0}]$
and $-$ otherwise;

\item[(iii)] for every $i\in\{1,\ldots,p-1\}$, $Dh_{n}|_{X_{a_{0}\cdots a_{n-1}\Box_i}(h_{n})}$ is a constant function.
\end{itemize}
Since $h_{n}$ is required to be continuous, there is no ambiguity in this definition.
Note that (P$)_{n}$ holds, which recovers the assumption of the induction.

There is a difference between the transition from $N-1$ to $N$ and that from $n-1$ to $n$, $n\geq N+1$.
Since $h_{N-1}=f_0$, an ``overhang'' may happen, in which case the definition of $h_{N}|_{X_{a_{0}\cdots a_{N-1}}(f_0)}$ does not make sense.
 As developed in the proof of Lemma \ref{d-ini} below.
the overhang does hot happen for an appropriately chosen $\phi$ and $N$.

%\begin{figure}
%\begin{center}
%\includegraphics[height=5.7cm,width=2.5cm]{over.eps}
%\caption{Case $p=2$ and $a_0\cdots a_N=a_0w$, $w\inW_n$: an overhang in $(a_0w;f_0)\times(w;f_0)$.
%The labelled segments are as follows.
%$$\textcircled{\scriptsize1}=(a_0w0;h_{N+1});\textcircled{\scriptsize2}=(a_0w1;h_{N+1}); \textcircled{\scriptsize3}=(w0;f_{0});$$
%$$\textcircled{\scriptsize4}=(w\Box_1;f_0); \textcircled{\scriptsize5}=(w1;f_{0}).$$ }
%\end{center}
%\end{figure}

%Let us record properties of gaps which immediately follows from the definition.
%Let $n$, $m$ be integers such that $N\leq n<m$. Then the following holds:
%\begin{itemize}
%\item[1.] $G_{n}\subset G_m$;

%\item[2.] Let $1\leq k\leq n$ be an integer.
%Any gap of $X\setminus G_m$ of order $k$ is a
%gap of $X\setminus G_n$ of order $k$.
%\end{itemize}

%The next one is a consequence of the construction.
%\begin{lemma}\label{relation}
%For every $n\geq 1$, every $k\in\{1,2,\ldots,n\}$
%and every $a_0\cdots a_{n}\in\mathcal W_{n+1}$ we have
%$$(h_{n})^k((a_0\cdots a_{n};h_{n}))=(a_k\cdots a_{n};h_{n-k}).$$
%\end{lemma}
%\begin{proof}
%We have
%$h_{n}((a_0\cdots a_{n};h_{n}))=(a_1\cdots a_{n};h_{n-1})$ by $(B2)_n$,
%and
%$h_{n}((a_1\cdots a_{n};h_{n-1}))
%=h_{n-1}((a_1\cdots a_{n};h_{n-1}))$ by construction. Using these relations recursively yields the desired equality. \end{proof}

\subsection{Analytic estimates on the sequence of continuous piecewise linear maps}\label{gapsp}
In this subsection we develop three estimates
on the sequence $\{h_n(\phi,N)\}_{n\geq0}$.
The next lemma states that
$\{h_n\}_{n\geq0}$ respects the proportions of gaps.
\begin{lemma}\label{d-ini}
There exist  a neighborhood $\hat{\mathscr{U}}$ of $\phi_0$ and $\hat N\geq1$ such that
the following holds
for every $\phi\in\hat{\mathscr{U}}$ and every integer $N>\hat N$:
the sequence $\{h_n(\phi,N)\}_{n\geq N}$ is well-defined, and
for every $i\in\{1,\ldots,p-1\}$,
every $n\geq 1$ and every $a_0\cdots a_{n-1}\in W_{n}$,
$$\Delta^{(i)}_{a_{0}\cdots a_{n-1}}(h_{n}(\phi,N))\leq 2K(f_0).$$
\end{lemma}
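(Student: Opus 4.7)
The plan is to prove (a) well-definedness of $h_n(\phi,N)$ and (b) the gap-ratio bound $\Delta^{(i)}_{a_0\cdots a_{n-1}}(h_n)\leq 2K(f_0)$ simultaneously by induction on $n$. For $n\leq N-1$ we have $h_n=f_0$, and Lemma \ref{distortion} gives $\Delta^{(i)}(f_0)\leq K(f_0)<2K(f_0)$, with well-definedness trivial. At each inductive step $n\geq N$ only the freshly created gaps of order $n$ need be examined, since the gaps of strictly lower order are inherited unchanged from $h_{n-1}$ (because $h_n\equiv h_{n-1}$ on $G_{n-1}$).

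Introducing the normalized working-piece fraction $\alpha_j^{(n)}(w):=|X_{wj}(h_n)|/|X_w(h_{n-1})|$ for a word $w$ of length $n$, the two length formulas that follow directly from the construction---$|X_{wj}(h_n)|=|X_{\sigma w\cdot j}(h_{n-1})|/E_{wj}(\phi)$ and, for $n\geq N+1$, $|X_w(h_{n-1})|=|X_{\sigma w}(h_{n-2})|/E_w(\phi)$---combine into the shift recursion
\[
\alpha_j^{(n)}(w)=\frac{E_w(\phi)}{E_{wj}(\phi)}\,\alpha_j^{(n-1)}(\sigma w).
\]
Since $[wj]\subset[w]$ in $\Sigma_p$ and $E$ is a minimum of $\exp\phi$, the factor $E_w(\phi)/E_{wj}(\phi)$ is automatically $\leq 1$, and iterating yields the crucial monotonicity $\alpha_j^{(n)}(w)\leq\alpha_j^{(N)}(\sigma^{n-N}w)$. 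This reduces the whole induction to a single base comparison at level $N$.

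At step $N$ one has $|X_w(h_{N-1})|=|X_w(f_0)|$, and the mean-value theorem gives $\alpha_j^{(N)}(w)/\alpha_j^{(f_0)}(w)=|Df_0(\xi_j)|/E_{wj}(\phi)$ for some $\xi_j\in X_{wj}(f_0)$, where $\alpha_j^{(f_0)}(w):=|X_{wj}(f_0)|/|X_w(f_0)|$. Here the $C^2$ hypothesis on $f_0$ plays its decisive role: Lipschitz regularity of $\log|Df_0|$ combined with the exponential shrinking of cylinders from Lemma \ref{standard} makes the variation of $\log|Df_0|$ on a level-$N$ cylinder an $O(\lambda^{-N})$ quantity, so this ratio is bounded by $(1+C\lambda^{-N})e^{\|\phi-\phi_0\|_\infty}$. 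Plugging into Lemma \ref{distortion} yields $\sum_j\alpha_j^{(n)}(w)\leq(1+\eta)\,K(f_0)/(K(f_0)+1)$ for every $n\geq N$, with $\eta$ as small as desired by shrinking $\hat{\mathscr{U}}$ and enlarging $\hat N$. Choosing $\eta<1/(2K(f_0)+1)$ forces the total gap fraction above $1/(2K(f_0)+1)>0$, which immediately yields (a); by then distributing the residual slack $|X_w(h_{n-1})|-\sum_j|X_{wj}(h_n)|$ among the $p-1$ gaps proportionally to the image gap lengths $|X_{\sigma w\cdot\Box_i}(h_{n-1})|$---the canonical rule which makes the slope of $h_n$ constant across every gap at each step---the ratios $\beta_i^{(n)}(w)/\sum_k\beta_k^{(n)}(w)$ become shift-invariant along the recursion, propagating the $f_0$-bound of Lemma \ref{distortion} to $\Delta^{(i)}(h_n)\leq K(f_0)(1+O(\eta))\leq 2K(f_0)$, which gives (b). The main obstacle is precisely this base step, and the $C^2$ hypothesis is indispensable: without the sharp level-dependent distortion $1+C\lambda^{-N}$, only the crude uniform constant $M_0$ of Lemma \ref{distortion} would be available, and no choice of $\hat N$ could absorb the perturbation from $\phi_0$ to $\phi$ within the slack $2K(f_0)-K(f_0)$ left by Lemma \ref{distortion}.
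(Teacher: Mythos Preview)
Your overall architecture matches the paper's: both reduce the inductive step $n\geq N+1$ to a monotonicity under the shift, and both handle the base step $n=N$ by comparing $|Df_0|$ to $E_{wj}(\phi)$ using the $C^2$ regularity of $f_0$. Your recursion $\alpha_j^{(n)}(w)=\frac{E_w(\phi)}{E_{wj}(\phi)}\,\alpha_j^{(n-1)}(\sigma w)\leq\alpha_j^{(n-1)}(\sigma w)$ is correct and is in fact the mechanism behind the paper's one-line claim $\Delta^{(i)}_{a_0\cdots a_{n-1}}(h_n)\leq\Delta^{(i)}_{a_1\cdots a_{n-1}}(h_{n-1})$.

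There is, however, a genuine gap when $p\geq 3$. The quantity $\Delta^{(i)}$ is \emph{local} to the block $X_{w\widehat i}=X_{w(i-1)}\cup X_{w\Box_i}\cup X_{wi}$, whereas you pass to \emph{global} fractions on all of $X_w(h_{n-1})$. Lemma~\ref{distortion} does not give $\sum_{j=0}^{p-1}\alpha_j^{(f_0)}(w)\leq K(f_0)/(K(f_0)+1)$; it only bounds the two-term local sum $(\alpha_{i-1}^{(f_0)}+\alpha_i^{(f_0)})/\gamma_i^{(f_0)}$, where $\gamma_i=|X_{w\widehat i}|/|X_w|$. Consequently your deduction ``total gap fraction $>1/(2K(f_0)+1)$'' is unjustified, and even if it held it would not bound the \emph{individual} ratio $\Delta^{(i)}$. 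Your proposed ``canonical rule'' of equal gap slopes is also not the paper's rule: the formula for $\tau_{a_0\cdots a_{n-1}\Box_i}$ in Lemma~\ref{slope} shows that in the paper's construction $|X_{w\widehat i}(h_n)|=|X_{\sigma w\,\widehat i}(h_{n-1})|/E_w(\phi)$, which forces the $\widehat i$-blocks (not the gap slopes) to be determined by pulling back through $h_{n-1}$.

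The fix is immediate once you localize. Under the paper's rule the normalized block size $\gamma_i^{(n)}(w):=|X_{w\widehat i}(h_n)|/|X_w(h_{n-1})|$ is \emph{exactly} shift-invariant, $\gamma_i^{(n)}(w)=\gamma_i^{(n-1)}(\sigma w)$. Combined with your monotonicity $\alpha_j^{(n)}(w)\leq\alpha_j^{(n-1)}(\sigma w)$, the local fractions $\tilde\alpha_j:=\alpha_j/\gamma_i$ are themselves shift-monotone, and since $\Delta^{(i)}=(\tilde\alpha_{i-1}+\tilde\alpha_i)/(1-\tilde\alpha_{i-1}-\tilde\alpha_i)$ this yields $\Delta^{(i)}_w(h_n)\leq\Delta^{(i)}_{\sigma w}(h_{n-1})$ directly. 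Iterating down to level $N$ and applying your (correct) base-case estimate then gives $\Delta^{(i)}\leq 2K(f_0)$ exactly as for $p=2$. This is precisely the paper's argument, with your $\alpha_j$-recursion supplying the detail the paper omits.
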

\begin{proof} 
We argue in two steps.
\medskip

\noindent{\it Step 1: \underline{Well-definedness of $\{h_n(\phi,N)\}_{n\geq N}$.}}
If $n\geq N+1$ then the transition from $n-1$ to $n$ makes sense.
It suffices to show that the transition from $N-1$ to $N$ makes sense.

Fix $\delta\in(0,1)$ such that
$\frac{(1+\delta)^2}{1+K(f_0)}<1$.
Choose a neighborhood $\hat{\mathscr{U}}$ of $\phi_0$ and $\hat N\geq1$
such that for every integer $N\geq \hat N$, every $\phi\in \hat{\mathscr{U}}$,
every $a_0\cdots a_{N-1}\in W_N$, every $i\in\{1,\ldots,p-1\}$ and every $j\in\{i-1,i\}$
the following holds:
\begin{equation}\label{get1}
\sup_{x\in    X_{a_{0}\cdots a_{N-1}j}(f_0)   }|Df_0(x)|\leq (1+\delta)E_{a_{0}\cdots a_{N-1}j}(\phi_0);\end{equation}
\begin{equation}\label{get2}
E_{a_{0}\cdots a_{N-1}j}(\phi_0)\leq(1+\delta)E_{a_{0}\cdots a_{N-1}j}(\phi).\end{equation}
The second condition follows from Lemma \ref{variation}.
Lemma \ref{distortion} implies
\begin{equation*}
\left|X_{a_{0}\cdots a_{N-1}\widehat i}(f_0)\right|^{-1}\sum_{j\in\{i-1,i\}}\left|X_{a_{0}\cdots a_{N-1}j}(f_0)\right|\leq\frac{1}{1+K(f_0)}.
\end{equation*}
By the Mean Value Theorem, for each $j\in\{i-1,i\}$ there exists $\xi_j\in  X_{a_{0}\cdots a_{N-1}j}(f_0)$
such that $|Df_0(\xi)|\left| X_{a_{0}\cdots a_{N-1}j}(f_0)\right|= \left|X_{a_{1}\cdots a_{N-1}j}(f_0)\right|$.
Hence
\begin{equation*}
\left|X_{a_{0}\cdots a_{N-1}\widehat i}(f_0)\right|^{-1}\sum_{j\in\{i-1,i\}}\frac{\left|X_{a_{1}\cdots a_{N-1}j}(f_0)\right|}{|Df_0(\xi_j)|}\leq
\frac{1}{1+K(f_0)}.
\end{equation*}
\eqref{get1} and \eqref{get2} yield 
%$E_{a_{0}\cdots a_{N-1}j}(\phi_0)\leq E_{a_{0}\cdots a_{N-1}j}(\phi)+V_{a_{0}\cdots a_{N-1}j}(\phi_0)+M(e^{\phi_0}-e^{\phi})
%\leq(1+\delta)E_{a_{0}\cdots a_{N-1}j}(\phi)$
\begin{equation*}
\left|X_{a_{0}\cdots a_{N-1}\widehat i}(f_0)\right|^{-1}\sum_{j\in\{i-1,i\}}\frac{\left|X_{a_{1}\cdots a_{N-1}j}(f_0)\right|}{E_{a_{0}\cdots a_{N-1}j}(\phi)}\leq
\frac{(1+\delta)^2}{1+K(f_0)}<1.
\end{equation*}
This condition prevents the overhang mentioned in the last paragraph of Sect.\ref{PL}.
Hence the transition from $N-1$ to $N$ makes sense.
\medskip

\noindent{\it Step 2: \underline{Proof of the inequality.}}
Let $N\geq \hat N$ be an integer and $\phi\in \hat{\mathscr{U}}$.
If $n\in\{0,1,\ldots,N-1\}$ then $h_n=f_0$, and 
 Lemma \ref{distortion} gives
$\Delta^{(i)}_{a_{0}\cdots a_{n-1}}(h_n)\leq K(f_0).$
Suppose $n=N$.
The definition of $h_{N}$ gives
\begin{equation}\label{deltE1}
\Delta^{(i)}_{a_{0}\cdots a_{N-1}}(h_{N})=\frac{\sum_{j\in\{i-1,i\}}\frac{|X_{a_1\cdots a_{N-1}j}(f_0)|}{E_{a_0\cdots
a_{N-1}j}(\phi)}}
{\int_{ X_{a_1\cdots a_{N-1}\widehat i}(f_0)                } |Df_0(f_0^{-1}(y))|^{-1}dy-
\sum_{j\in\{i-1,i\}}\frac{|X_{a_1\cdots a_{N-1}j}(f_0)|}{E_{a_0\cdots a_{N-1}j}(\phi)}}.
\end{equation}
To estimate the denominator of this fraction, put $$s=\sup_{ x\in X_{a_0\cdots a_{N-1}}(f_0)}|Df_0(x)|\quad\text{and}
\quad\eta=2V_{a_0\cdots a_{N-1}}(\phi_0)+M(e^{\phi_0}-e^\phi).$$
Shrinking $\hat{\mathscr{U}}$ and enlarging $\hat N$ if necessary,
we may assume
\begin{equation}\label{s}s>\max\{\eta K(f_0),\eta\}.\end{equation}
 We have $s^{-1}|X_{a_1\cdots a_{N-1}\widehat i}(f_0)|\leq\int_{X_{a_1\cdots a_{N-1}\widehat i}(f_0)} |Df_0(f_0^{-1}(y))|^{-1}dy$,
and Lemma \ref{variation} gives
$E_{a_0\cdots a_{N-1}j}(\phi)\geq s-\eta$ for every $j\in\{i-1,i\}$.
Hence, the denominator of the fraction in \eqref{deltE1} is bounded from below by
\begin{align*}
s^{-1}\left|X_{a_1\cdots a_{N-1}\widehat i}(f_0)\right|&-\frac{1}{s-\eta}\sum_{j\in\{i-1,i\}}|X_{a_1\cdots a_{N-1}j}(f_0)|\\
&=\frac{s\left|X_{a_1\cdots a_{N-1}\Box_i}(f_0)\right|-\eta \left|X_{a_1\cdots a_{N-1}\widehat i}(f_0)\right|}{s(s-\eta)}\\
&\geq\left|X_{a_1\cdots a_{N-1}\Box_i}(f_0)\right|\cdot\frac{s-\eta K(f_0)}{s(s-\eta)}>0,\end{align*}
where the first inequality follows from Lemma \ref{distortion} and
the last from \eqref{s}.
From \eqref{deltE1} we have
\begin{equation}\label{deltE2}\Delta^{(i)}_{a_{0}\cdots a_{N-1}}(h_{N})\leq
\frac{\sum_{j\in\{i-1,i\}}\frac{|X_{a_1\cdots a_{N-1}j}(f_0)|}{E_{a_0\cdots a_{N-1}j}(\phi)}}
{s^{-1}|X_{a_1\cdots a_{N-1}\widehat i}(f_0)|-\sum_{j\in\{i-1,i\}}
\frac{|X_{a_1\cdots a_{N-1}j}(f_0)|}{E_{a_0\cdots a_{N-1}j}(\phi)}}.\end{equation}
From Lemma \ref{variation} and the definition of $\eta$, for every $j\in\{i-1,i\}$ we have
\begin{align*} s-E_{a_0\cdots a_{N-1}j}(\phi)
&\leq  F_{a_0\cdots a_{N-1}}(\phi_0)-E_{a_0\cdots a_{N-1}}(\phi)\\
&=V_{a_0\cdots a_{N-1}}(\phi_0)
+E_{a_0\cdots a_{N-1}}(\phi_0)-E_{a_0\cdots a_{N-1}}(\phi)\\
&\leq\eta.\end{align*}
Substituting $E_{a_0\cdots a_{N-1}j}(\phi)\geq s-\eta$ 
into \eqref{deltE2} yields
\begin{align*}\Delta^{(i)}_{a_0\cdots a_{N-1}}(h_{N})&\leq\frac{\sum_{j\in\{i-1,i\}}|X_{a_1\cdots a_{N-1}j}(f_{0})|}
{|X_{a_1\cdots a_{N-1}\Box_i}(f_0)|-(\eta/s)|X_{a_1\cdots a_{N-1}\widehat i}(f_0)|}\\
&=\left(1-\frac{\eta}{s}\frac{|X_{a_1\cdots a_{N-1}\widehat i}(f_0)|}{|X_{a_1\cdots a_{N-1}\Box_i}(f_0)|}\right)^{-1} 
\frac{\sum_{j\in\{i-1,i\}}|X_{a_1\cdots a_{N-1}j}(f_{0})|}
{|X_{a_1\cdots a_{N-1}\Box_i}(f_0)|}\\
&\leq \left(1-\frac{\eta}{s}(1+K(f_0))\right)^{-1}\Delta^{(i)}_{a_1\cdots a_{N-1}}(f_{0})\\
&\leq 2\Delta^{(i)}_{a_1\cdots a_{N-1}}(f_{0})\leq 2K(f_0).
\end{align*}

It is left to treat the case $n\geq N+1$.
The construction of $h_n$ from $h_{n-1}$ in Sect.\ref{PL} implies 
$$\Delta^{(i)}_{a_0\cdots a_{n-1}}(h_{n})\leq\Delta^{(i)}_{a_1\cdots a_{n-1}}(h_{n-1}).$$
Using this inductively yields
$\Delta^{(i)}_{a_0\cdots a_{n-1}}(h_{n})\leq\Delta^{(i)}_{a_{n-N}\cdots a_{n-1}}(h_{N})\leq 2K(f_0).$
\end{proof}

In what follows, let $\hat{\mathscr{U}}$ be the neighborhood of $\phi_0$ in $C(\Sigma_p)$ and $\hat N\geq1$ 
the number in the statement of Lemma \ref{d-ini}.
%Let $N>\hat N$,  $n\geq N$,
For an integer $n\geq1$ and
 $a_0\cdots a_{n-1}\in W_{n}$ 
define $\tau_{a_0\cdots a_{n-1}\Box_i}$ to be the constant value of $|Dh_{n}|$ on 
$X_{a_0\cdots a_{n-1}\Box_i}(h_{n})$.
\begin{lemma}\label{slope}
The following holds
for every $\phi\in\hat{\mathscr{U}}$, every integer $N>\hat{N}$ and $\{h_n(\phi,N)\}_{n\geq0}$:
\begin{itemize}
\item[(a)] For every $i\in\{1,\ldots,p-1\}$,
every $n\geq N+1$ and every $a_0\cdots a_{n-1}\in W_{n}$,
$$\tau_{a_0\cdots a_{n-1}\Box_i}>E_{a_0\cdots a_{n-1}}(\phi)-2M(e^{-2\phi})
V_{a_0\cdots a_{n-1}}(\phi)E_{a_0\cdots a_{n-1}}^2(\phi)K(f_0).$$
Moreover, for every $i\in\{1,\ldots,p-1\}$ and every $a_0\cdots a_{N-1}\in W_{N}$,
\begin{align*}
\tau_{a_0\cdots a_{N-1}\Box_i}>&E_{a_0\cdots a_{N-1}}(\phi_0)-2M(e^{-\phi-\phi_{0}})
\left(2V_{a_0\cdots a_{N-1}}(\phi_0)+M(e^{\phi}-e^{\phi_0})\right)\\
&\times E_{a_0\cdots a_{N-1}}^2(\phi_0)
K(f_0).\end{align*}
\item[(b)]\label{C1} For every integers $m$, $n$ with $m> n\geq N$, every 
$a_0\cdots a_{m-1}\in W_{m}$ 
and every $i$, $j\in\{1,2,\ldots,p-1\}$,
$$|\tau_{a_0\cdots a_{m-1}\Box_j}-\tau_{a_0\cdots a_{n-1}\Box_i}|
\leq  (1+M(e^{2\phi})K(f_0))V_{a_0\cdots a_{n-1}}(\phi).$$
\end{itemize}
\end{lemma}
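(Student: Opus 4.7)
The proof hinges on the following continuity identity, which falls out of the piecewise-linear construction of Sect.\ref{PL}:
\begin{equation*}
\tau_{a_0\cdots a_{n-1}\Box_i} = \frac{|X_{a_1\cdots a_{n-1}\Box_i}(h_{n-1})|}{|X_{a_0\cdots a_{n-1}\Box_i}(h_n)|}.
\end{equation*}
Indeed, the constant slope on each sub-gap is pinned down by the requirement that $h_n$ be continuous at the two endpoints shared with the flanking sub-cylinders, whose slopes are prescribed as $E_{a_0\cdots a_{n-1}j}(\phi)$ in (ii), together with (P)$_{n-1}$ which identifies the image of the gap with $X_{a_1\cdots a_{n-1}\Box_i}(h_{n-1})$. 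Lower-bounding $\tau$ in part (a) thus reduces to upper-bounding the denominator in terms of the numerator and the surrounding slopes.

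For part (a), my plan is to write
\begin{equation*}
|X_{a_0\cdots a_{n-1}\Box_i}(h_n)| = |X_{a_0\cdots a_{n-1}\widehat i}(h_n)| - \sum_{j\in\{i-1,i\}} \frac{|X_{a_1\cdots a_{n-1}j}(h_{n-1})|}{E_{a_0\cdots a_{n-1}j}(\phi)},
\end{equation*}
using (ii) for the flanking sub-cylinder lengths. The analogous decomposition on the image side gives $|X_{a_1\cdots a_{n-1}\widehat i}(h_{n-1})| = \sum_{j\in\{i-1,i\}}|X_{a_1\cdots a_{n-1}j}(h_{n-1})| + |X_{a_1\cdots a_{n-1}\Box_i}(h_{n-1})|$. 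Applying Lemma \ref{variation} (so that $E_{a_0\cdots a_{n-1}}(\phi) \leq E_{a_0\cdots a_{n-1}j}(\phi) \leq E_{a_0\cdots a_{n-1}}(\phi) + V_{a_0\cdots a_{n-1}}(\phi)$) together with Lemma \ref{d-ini} (which controls $\Delta^{(i)}_{a_1\cdots a_{n-1}}(h_{n-1}) \leq 2K(f_0)$), and performing a first-order expansion of $\tau$ around $E_{a_0\cdots a_{n-1}}(\phi)$, yields the desired lower bound. The factor $E^2_{a_0\cdots a_{n-1}}(\phi)$ arises from inverting the flanking slopes twice in the algebra, and $M(e^{-2\phi})$ serves as a uniform upper bound for the reciprocal-slope product. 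In the base case $n = N$ one has $h_{N-1}=f_0$, so the image-side sub-cylinder lengths are governed by $|Df_0|$ (controlled by Lemma \ref{distortion}) and their slopes are encoded in $\phi_0$; Lemma \ref{variation} then translates $\phi_0$-statistics into $\phi$-statistics, which accounts for the cross-term $M(e^{\phi}-e^{\phi_0})$, while $M(e^{-\phi-\phi_0})$ absorbs the mixed reciprocal product.

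For part (b), I would apply the triangle inequality with both $E$-values inserted:
\begin{equation*}
|\tau_{a_0\cdots a_{m-1}\Box_j} - \tau_{a_0\cdots a_{n-1}\Box_i}| \leq |\tau_{a_0\cdots a_{m-1}\Box_j} - E_{a_0\cdots a_{m-1}}(\phi)| + |E_{a_0\cdots a_{m-1}}(\phi) - E_{a_0\cdots a_{n-1}}(\phi)| + |E_{a_0\cdots a_{n-1}}(\phi) - \tau_{a_0\cdots a_{n-1}\Box_i}|.
\end{equation*}
The outer differences are bounded by part (a) together with a matching upper bound on $\tau - E$ obtained from the same Taylor-type expansion. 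The middle difference is at most $V_{a_0\cdots a_{n-1}}(\phi)$ since $[a_0\cdots a_{m-1}] \subset [a_0\cdots a_{n-1}]$ and both values are minima of $e^{\phi}$ over nested cylinders. Using the monotonicity $V_{a_0\cdots a_{m-1}}(\phi) \leq V_{a_0\cdots a_{n-1}}(\phi)$ together with the uniform bound $E^2_{a_0\cdots}(\phi) \leq M(e^{2\phi})$ collapses the accumulated constants into $(1+M(e^{2\phi})K(f_0))$.

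The main obstacle is the sharp algebraic manipulation in part (a). Extracting $\tau \geq E_{a_0\cdots a_{n-1}}(\phi) - O(V_{a_0\cdots a_{n-1}}(\phi))$ from the ratio of two near-equal quantities requires tracking the second-order cancellations among three nearby slopes and propagating the variation and distortion estimates through successive inversion and subtraction steps, while correctly identifying the powers of $E$ and the absorbing constant $M(e^{-2\phi})$. The base case $n = N$ repeats this analysis with the additional complication of simultaneously handling $\phi$ and $\phi_0$. Once part (a) is firmly in hand, part (b) is a routine triangle-inequality argument.
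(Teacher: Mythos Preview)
Your proposal is correct and follows essentially the same route as the paper. Two small differences worth noting: for part (a) the paper works directly with the reciprocal, writing
\[
\tau_{a_0\cdots a_{n-1}\Box_i}^{-1}
= E_{a_0\cdots a_{n-1}}^{-1}(\phi)
+\frac{\sum_{j\in\{i-1,i\}}\frac{E_{a_0\cdots a_{n-1}j}(\phi)-E_{a_0\cdots a_{n-1}}(\phi)}
{E_{a_0\cdots a_{n-1}}(\phi)E_{a_0\cdots a_{n-1}j}(\phi)}\,|X_{a_1\cdots a_{n-1}j}(h_{n-1})|}
{|X_{a_1\cdots a_{n-1}\Box_i}(h_{n-1})|},
\]
which makes the Lemma~\ref{d-ini} input $\Delta^{(i)}_{a_1\cdots a_{n-1}}(h_{n-1})\le 2K(f_0)$ appear immediately and cleanly isolates the error term before inverting; your ``first-order expansion'' amounts to the same algebra but leaves implicit the identity $|X_{a_0\cdots a_{n-1}\widehat i}(h_n)|=|X_{a_1\cdots a_{n-1}\widehat i}(h_{n-1})|/E_{a_0\cdots a_{n-1}}(\phi)$ (valid for $n\ge N+1$ since $h_{n-1}$ has constant slope $E_{a_0\cdots a_{n-1}}(\phi)$ there), which is what ties your domain-side decomposition to the image-side quantities. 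For part (b) the paper bypasses your three-term triangle inequality by using the direct upper bound $\max\{\tau_{a_0\cdots a_{m-1}\Box_j},\tau_{a_0\cdots a_{n-1}\Box_i}\}\le F_{a_0\cdots a_{n-1}}(\phi)$, which is immediate from the construction, and then subtracts the lower bounds from part (a); since $F-E=V$ this is equivalent to your argument but avoids needing a separate ``matching upper bound on $\tau-E$''.
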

\begin{proof}
As for (a),  let $i\in\{1,\ldots,p-1\}$
and $a_0\cdots a_{n-1}\in W_{n}$.
We first consider the case $n\geq N+1$.
From the definition of $h_{n}$,
\begin{align*}
\tau_{a_0\cdots a_{n-1}\Box_i}^{-1}=&|X_{a_1\cdots a_{n-1}\Box_i}(h_{n-1})|^{-1}\left(\frac{|X_{a_1\cdots a_{n-1}\widehat i}(h_{n-1})|}
{E_{a_0\cdots a_{n-1}}(\phi)}-\sum_{j\in\{i-1,i\}}\frac{|X_{a_1\cdots a_{n-1} j}(h_{n-1})|}
{E_{a_0\cdots a_{n-1} j}(\phi)}\right)\\
&=E_{a_{0}\cdots a_{n-1}}^{-1}(\phi)+
\frac{\sum_{j\in\{i-1,i\}}\frac{E_{a_{0}\cdots a_{n-1} j}(\phi)-
E_{a_{0}\cdots a_{n-1}}(\phi)}{E_{a_0\cdots a_{n-1}}(\phi)
E_{a_{0}\cdots a_{n-1} j}(\phi)}|X_{a_1\cdots a_{n-1} j}(h_{n-1})|}{|X_{a_1\cdots a_{n-1}\Box_i}(h_{n-1})|}.
\end{align*}
Hence
\begin{align*}\tau_{a_0\cdots a_{n-1}\Box_i}^{-1}-E_{a_0\cdots a_{n-1}}^{-1}(\phi)&=
\frac{\sum_{j\in\{i-1,i\}}
\frac{E_{a_{0}\cdots a_{n-1}j}(\phi)-E_{a_{0}\cdots a_{n-1}}(\phi)}
{E_{a_0\cdots a_{n-1}}(\phi)E_{a_{0}\cdots a_{n-1}j}(\phi)}|X_{a_1\cdots a_{n-1} j}(h_{n-1})|}
{|X_{a_1\cdots a_{n-1}\Box_i}(h_{n-1})|}\\
&\leq
M(e^{-2\phi})\frac{\sum_{j\in\{i-1,i\}}
V_{a_{0}\cdots a_{n-1}}(\phi)|X_{a_1\cdots a_{n-1}j}(h_{n-1})|}
{|X_{a_1\cdots a_{n-1}\Box_i}(h_{n-1})|}
\\
&=  M(e^{-2\phi})V_{a_0\cdots a_{n-1}}(\phi)\Delta^{(i)}_{a_1\cdots a_{n-1}}(h_{n-1}).
\end{align*}
This yields
\begin{align*}
\tau_{a_0\cdots a_{n-1}\Box_i}& \geq
 E_{a_0\cdots a_{n-1}}(\phi)
-\frac{V_{a_0\cdots a_{n-1}}(\phi)E_{a_0\cdots a_{n-1}}^2(\phi)\Delta^{(i)}_{a_1\cdots a_{n-1}}(h_{n-1})}
{V_{a_0\cdots a_{n-1}}(\phi)E_{a_0\cdots a_{n-1}}(\phi)
\Delta^{(i)}_{a_1\cdots a_{n-1}}(h_{n-1})+1/M(e^{-2\phi})}\\
&> E_{a_0\cdots a_{n-1}}(\phi)-M(e^{-2\phi})V_{a_0\cdots a_{n-1}}(\phi)E_{a_0\cdots a_{n-1}}^2(\phi)\Delta^{(i)}_{a_1\cdots a_{n-1}}(h_{n-1})\\
&\geq  E_{a_0\cdots a_{n-1}}(\phi)-2M(e^{-2\phi})V_{a_0\cdots a_{n-1}}(\phi)
E_{a_0\cdots a_{n-1}}^2(\phi)K(f_0).
\end{align*}
The last inequality follows from Lemma \ref{d-ini}.

A proof for the case $n=N$ is analogous to the above argument modulo minor differences.
We simply replace $E_{a_0\cdots a_{N-1}}(\phi)$ by $\sup_{ x\in X_{a_0\cdots a_{N-1}}(f_0)}|Df_0(x)|$ and argue in the same way.
  On the fraction in the summand,  for every $j\in\{i-1,i\}$,
 \begin{align*}E_{a_0\cdots a_{N-1}j}(\phi)-\sup_{ x\in X_{a_0\cdots a_{N-1}}(f_0)}|Df_0(x)|&\leq 
 E_{a_0\cdots a_{N-1}j}(\phi)-E_{a_0\cdots a_{N-1}}(\phi_0)\\
 &\leq  F_{a_0\cdots a_{N-1}}(\phi)-E_{a_0\cdots a_{N-1}}(\phi_0)\\
 %&\leq  F(\phi_0,a_0w)+V(\phi_0,a_0w)+M(e^{\phi_0}-e^\phi)-E(\phi_0,a_0w)\\
 &\leq2V_{a_0\cdots a_{N-1}}(\phi_0)+M(e^{\phi}-e^{\phi_{0}}),\end{align*}
 where the last inequality follows from the second estimate in Lemma \ref{variation}.
 This completes the proof of Lemma \ref{slope}(a).

%\begin{lemma}\label{C1}
%For every integers $m$, $n$ with $m> n\geq N$ and every $a_0\cdots a_n\in\mathcal W_{n+1}$ we have
%$$\|Dh_m-Dh_n\|_{C^0,(a_0\cdots a_n;h_n)\setminus S(h_m)}
%\leq  (1+M(e^{2\phi})K(f_0))V(\phi,a_0\cdots a_n).$$
%\end{lemma}
%\begin{proof}
%Let $w\in\mathcal W_{n+1}$.
%From the construction in Sect.\ref{PL}, for every $x\in(w;h_n)\setminus S(h_m)$,
%$$\min
%\{\tau(ww'\Box_i)\colon w'\in \mathcal W_{m-n},i\in\{1,\ldots,p-1\}\}\leq
%|Dh_m(x)|\leq F(\phi,w)$$
%and
%$$\min
%\{\tau(ww'\Box_i)\colon w'\in \mathcal W_{m-n},i\in\{1,\ldots,p-1\}\}\leq
%|Dh_n(x)|\leq F(\phi,w).$$
%For each $w'\in\mathcal W_{m-n}$,
%Lemma \ref{slope} implies
%\begin{align*}
%\tau(ww'\Box_i)&>E(\phi,ww')-V(\phi,ww')E(\phi,ww')^2K(f_0).
%\end{align*}
%Therefore for every $x\in(w;h_n)\setminus S(h_m)$,
%\begin{align*}
%|Dh_{m}(x)-Dh_n(x)|&\leq
% F(\phi,w)-\left(E(\phi,ww')-
%V(\phi,ww')E(\phi,ww')^2K(f_0)\right)\\
%&\leq \left(1+E(\phi,ww')^2K(f_0)\right)V(\phi,w)\\
%&\leq  (1+M(e^{2\phi})K(f_0))V(\phi,w).\qedhere\end{align*}
%\end{proof}

From the construction in Sect.\ref{PL}, 
$$\max
\{                 \tau_{a_0\cdots a_{m-1}\Box_j},\tau_{a_0\cdots a_{n-1}\Box_i}         \}
\leq F_{a_0\cdots a_{n-1}}(\phi).$$
Lemma \ref{slope}(a) implies
\begin{align*}
\tau_{a_0\cdots a_{m-1}\Box_j}&>E_{a_0\cdots a_{m-1}}(\phi)-V_{a_0\cdots a_{m-1}}(\phi)
E_{a_0\cdots a_{m-1}}^2(\phi)K(f_0)
\end{align*}
and
\begin{align*}
\tau_{a_0\cdots a_{n-1}\Box_i}&>E_{a_0\cdots a_{n-1}}(\phi)-V_{a_0\cdots a_{n-1}}(\phi)
E_{a_0\cdots a_{n-1}}^2(\phi)K(f_0).
\end{align*}
If 
$\tau_{a_0\cdots a_{n-1}\Box_i}>\tau_{a_0\cdots a_{m-1}\Box_j}$ then
\begin{align*}
\tau_{a_0\cdots a_{n-1}\Box_i}-\tau_{a_0\cdots a_{m-1}\Box_j}&\leq \left(1+E_{a_0\cdots a_{n-1}}^2(\phi)K(f_0)\right)V_{a_0\cdots a_{n-1}}(\phi).\end{align*}
In the case $\tau_{a_0\cdots a_{n-1}\Box_i}<\tau_{a_0\cdots a_{m-1}\Box_j}$
we get the same inequality.
This completes the proof of Lemma \ref{slope}(b).
\end{proof}

\subsection{Perturbation to $C^1$ maps}\label{perturb}
We have constructed a sequence $\{h_n(\phi,N)\}_{n\geq0}$ of continuous piecewise linear maps.
For each $n\geq N$ we define a $C^1$ map $f_n\colon X\to[0,1]$ 
by perturbing $h_n$ on each gap of order $k=N,N+1,\ldots,n$.

Start with $f_n=h_n$ on $G_N$.
Let $k\geq N$ and $a_0\cdots a_{k-1}\in W_k$.
Define $f_n|_{X_{a_0\cdots a_{k-1}}(h_{n-1})}$ as follows.
Recall that 
$$X_{a_0\cdots a_{k-1}}(h_{n-1})=\bigcup_{i=1}^{p-1}X_{a_0\cdots a_{k-1}\widehat i}(h_{n-1}),$$
and for every $i\in\{1,2,\ldots,p-1\}$,
$$X_{a_0\cdots a_{k-1}\widehat i}(h_{n-1})=X_{a_0\cdots a_{k-1}(i-1)}(h_{n-1})\bigcup
X_{a_0\cdots a_{k-1}\Box_i}(h_{n-1})\bigcup X_{a_0\cdots a_{k-1}i}(h_{n-1}).$$
For every $x\in  X_{a_0\cdots a_{k-1}(i-1)}(h_{n-1})\bigcup X_{a_0\cdots a_{k-1}i}(h_{n-1})$,
set $f_n(x)=h_n(x)$.
In order to define $f_n$ on $X_{a_0\cdots a_{k-1}\Box_i}(h_{n-1})$
we need the next lemma.
 
\begin{lemma}\label{smoothing}
Let $E>0$, $E'>0$, $\tau>0$ be such that $2\tau-(E+E')/2>0$
and $I=[\alpha,\beta]$ a compact interval.
There exists a $C^1$ diffeomorphism $\varphi\colon I\to\mathbb R$ such that $D\varphi(\alpha)=E$, $D\varphi(\beta)=E'$,
$|\varphi(I)|=\tau|I|$ and for every $x\in I$,
$$|D\varphi(x)-\tau|\leq\max\{E,E'\}-\tau.$$
\end{lemma}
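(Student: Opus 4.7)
The plan is to specify the derivative $g=D\varphi$ directly as a continuous, strictly positive function on $I$ satisfying the four conditions at hand (endpoint values $E,E'$; integral $\tau|I|$; and the pointwise bound $|g-\tau|\leq \max\{E,E'\}-\tau$), and then recover $\varphi$ by setting $\varphi(x)=\int_\alpha^x g(t)\,dt$ (up to an irrelevant additive constant). Continuity of $g$ makes $\varphi$ of class $C^1$; positivity of $g$ makes $\varphi$ a diffeomorphism onto its image; the endpoint values of $g$ give $D\varphi(\alpha)=E$, $D\varphi(\beta)=E'$; and the integral condition gives $|\varphi(I)|=\tau|I|$. So the entire problem reduces to the one-dimensional interpolation problem of constructing such a $g$.

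By the involutive symmetry $x\mapsto \alpha+\beta-x$, I may assume $E\leq E'$, so that $\max\{E,E'\}=E'$ and the pointwise target range for $g$ is $J=[2\tau-E',\,E']$. Using $E\leq E'$ together with the hypothesis $2\tau-(E+E')/2>0$, the algebraic feasibility conditions I need to check at the outset are that $J$ is a nondegenerate interval, that both endpoint values $E$ and $E'$ lie in $J$, and that $g$ can be arranged to stay strictly positive (which, via $g\geq 2\tau-E'$, is what makes $\varphi$ a diffeomorphism). These are elementary inequalities.

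For the explicit construction of $g$ I propose a piecewise-linear template with two short transition zones near the endpoints: pick a middle constant $c\in J$ and small widths $\delta_1,\delta_2>0$ with $\delta_1+\delta_2<|I|$, and let $g$ interpolate linearly from $E$ to $c$ on $[\alpha,\alpha+\delta_1]$, equal $c$ on the middle interval $[\alpha+\delta_1,\beta-\delta_2]$, and interpolate linearly from $c$ to $E'$ on $[\beta-\delta_2,\beta]$. A short computation gives
\begin{equation*}
\int_\alpha^\beta g \;=\; c|I|+\tfrac{1}{2}(E-c)\delta_1+\tfrac{1}{2}(E'-c)\delta_2,
\end{equation*}
so the integral condition $\int_\alpha^\beta g=\tau|I|$ becomes a single linear equation in the three parameters $(c,\delta_1,\delta_2)$. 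The natural first choice is $c=\tau$ when $\tau\in[E,E']$, reducing the equation to $(\tau-E)\delta_1=(E'-\tau)\delta_2$, which any sufficiently small positive pair in the correct ratio solves.

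The main obstacle I anticipate is the case where $\tau$ falls outside the interval $[E,E']$: here $c=\tau$ cannot serve as the middle value coming out of a linear interpolation of endpoint data, and $c$ must instead be chosen on the opposite side of $\tau$ from the endpoints so that the integral can still be moved down to (or up to) $\tau|I|$ while $c$ stays inside $J$. This is precisely the point at which the hypothesis $2\tau-(E+E')/2>0$ must be invoked carefully, to guarantee enough room in $J$ for such a $c$. Once this case analysis is dispensed with, verifying the four stated properties of $\varphi$ is a routine calculation from the explicit formula for $g$.
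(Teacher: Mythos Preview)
Your approach is sound and would succeed, but the paper's construction is considerably simpler and sidesteps the case analysis you anticipate. Instead of a three-piece template with a flat plateau and free parameters $(c,\delta_1,\delta_2)$, the paper takes $g$ to be the two-piece tent with a single break at the midpoint $m=(\alpha+\beta)/2$: set $g(\alpha)=E$, $g(\beta)=E'$, $g(m)=2\tau-(E+E')/2$, and let $g$ be linear on each half-interval. The trapezoidal rule then gives $\int_I g=\tau|I|$ automatically, with no parameters to solve for; the hypothesis $2\tau-(E+E')/2>0$ is exactly what makes $g$ strictly positive; and the deviation bound follows from the two-sided estimate $2\tau-(E+E')/2\le g(x)\le\max\{E,E'\}$ by a two-line case check on the sign of $g(x)-\tau$. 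Your ``main obstacle'' (the position of $\tau$ relative to $[E,E']$) simply never arises in this construction.

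One small correction to your outline: the claim that the feasibility conditions (nondegeneracy of $J$, and $E,E'\in J$) are ``elementary inequalities'' following from the stated hypotheses alone is not accurate. For instance, $J=[2\tau-E',E']$ is nondegenerate iff $\tau<E'$, and $E\in J$ requires $E+E'\ge 2\tau$; neither follows from $2\tau-(E+E')/2>0$. The lemma as written tacitly relies on $\tau\le\min\{E,E'\}$, which always holds in the paper's applications (it is built into the construction of $\tau_{a_0\cdots a_{n-1}\Box_i}$; see the first displayed identity in the proof of Lemma~\ref{slope}(a), which shows $\tau^{-1}\ge E_{a_0\cdots a_{n-1}}(\phi)^{-1}$). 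Both the paper's argument and yours use this silently; without it the conclusion can already fail at $x=\alpha$.
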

\begin{proof}
Define a continuous function $g\colon I\to\mathbb R$ by the following set of conditions:
$g(\alpha)=E$, $g(\beta)=E'$, $g((\alpha+\beta)/2)=2\tau-(E+E')/2$, $g$ is linear on $[\alpha,(\alpha+\beta)/2]$ and 
$[(\alpha+\beta/2),\beta]$. 
Define $\varphi\colon I\to\mathbb R$ by $\varphi(x)=\int_0^{x} g(y)dy.$
It is easy to check that $f$ satisfies the desired properties apart from the last one.
To show the last property, note that
$2\tau-(E+E')/2\leq D\varphi(x)\leq\max\{E,E'\}$ for every $x\in I$.
Let $x\in I$ and suppose $D\varphi(x)-\tau<0$.
Then $$|D\varphi(x)-\tau|=-D\varphi(x)+\tau\leq (E+E')/2-\tau\leq\max\{E,E'\}-\tau.$$
If $D\varphi(x)-\tau\geq0$, then
 $$|D\varphi(x)-\tau|=D\varphi(x)-\tau\leq\max\{E,E'\}-\tau.$$
 This completes the proof of the lemma.
\end{proof}
 Let $\psi\colon \overline{X_{a_0\cdots a_{k-1}\Box_i}(h_{n-1})}\to\mathbb R$ be
a $C^1$ diffeomorphism for which the conclusion of Lemma \ref{smoothing} holds
with  $E=E_{a_0\cdots a_{k-1}(i-1)}(\phi)$, $E'=E_{a_0\cdots a_{k-1}i}(\phi)$,
 $\tau=\tau_{a_0\cdots a_{k-1}\Box_i}$, $I=\overline{X_{a_0\cdots a_{k-1}\Box_i}(h_{n-1})}$.
 Lemma \ref{slope} ensures the condition
 $2\tau-(E+E')/2>0$ in Lemma \ref{smoothing}.
  Define 
$$f_n(x)=h_n(l)\pm \psi(x)\text{ for every $x\in  X_{a_0\cdots a_{k-1}\Box_i}(h_{n-1})$},$$
where $l$ denotes the left boundary point of $X_{a_0\cdots a_{k-1}\Box_i}(h_{n-1})$.
The sign is $+$ if $Df_0>0$ on $(r,s)$ and $-$ otherwise.
This finishes the definition of $f_n$.
 Lemma \ref{smoothing} implies that $f_n\colon X\to[0,1]$ is $C^1$ and satisfies (E1).

\begin{lemma}\label{gapd}
The following holds
for every $\phi\in\hat{\mathscr{U}}$, every integer $N>\hat{N}$  and the sequences $\{h_n(\phi,N)\}_{n\geq0}$ and $\{f_n(\phi,N)\}_{n\geq0}$:
Let $n\geq N+1$. For every $k\in\{N,N+1,\ldots n\}$, every $a_0\cdots a_{k-1}\in W_k$,
every $i\in\{1,2,\ldots,p-1\}$ and every $x\in X_{a_0\cdots a_{k-1}\widehat i}(h_{n-1})$ we have
\begin{align*}||Df_n(x)|-\tau_{a_0\cdots a_{k-1}\Box_i}|\leq M(e^{-2\phi})V_{a_0\cdots a_{k-1}}(\phi)
E_{a_0\cdots a_{k-1}}^2(\phi)K(f_0).\end{align*}
Moreover, for every $a_0\cdots a_{N-1}\in W_N$,
every $i\in\{1,2,\ldots,p-1\}$ and every $x\in X_{a_0\cdots a_{N-1}\widehat i}(f_{0})$ we have
\begin{align*}||Df_N(x)|-\tau_{a_0\cdots a_{N-1}\Box_i}|\leq2M(e^{-\phi-\phi_0})V_{a_0\cdots a_{N-1}}(\phi_0)E_{a_0\cdots a_{N-1}}^2(\phi_0)K(f_0).\end{align*}

\end{lemma}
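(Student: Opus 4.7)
My plan is a direct case analysis based on the explicit construction of $f_n$ in Section \ref{perturb}, using the structural estimates from Lemmas \ref{d-ini} and \ref{slope}. Write $X_{a_0\cdots a_{k-1}\widehat i}(h_{n-1})$ as the disjoint union of the gap $X_{a_0\cdots a_{k-1}\Box_i}(h_{n-1})$ and the two adjacent sub-cylinders $X_{a_0\cdots a_{k-1}j}(h_{n-1})$ for $j\in\{i-1,i\}$; then read off $|Df_n(x)|$ on each piece from the construction and compare with $\tau_{a_0\cdots a_{k-1}\Box_i}$.

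For $x$ in the gap, $f_n$ is built via the Smoothing Lemma (Lemma \ref{smoothing}) applied with parameters $E=E_{a_0\cdots a_{k-1}(i-1)}(\phi)$, $E'=E_{a_0\cdots a_{k-1}i}(\phi)$, $\tau=\tau_{a_0\cdots a_{k-1}\Box_i}$, so its last conclusion yields $||Df_n(x)|-\tau_{a_0\cdots a_{k-1}\Box_i}|\leq\max\{E,E'\}-\tau_{a_0\cdots a_{k-1}\Box_i}$. One has $E,E'\leq F_{a_0\cdots a_{k-1}}(\phi)$ and $\tau_{a_0\cdots a_{k-1}\Box_i}\leq E_{a_0\cdots a_{k-1}}(\phi)$ (the latter from the identity $\tau^{-1}=E_{a_0\cdots a_{k-1}}^{-1}(\phi)+(\text{nonnegative})$ seen in the proof of Lemma \ref{slope}(a)), so the right-hand side splits as $V_{a_0\cdots a_{k-1}}(\phi)+(E_{a_0\cdots a_{k-1}}(\phi)-\tau_{a_0\cdots a_{k-1}\Box_i})$, and Lemma \ref{slope}(a) controls the second summand. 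For $x$ in a sub-cylinder, the construction sets $f_n\equiv h_n$, so $|Df_n(x)|=|Dh_n(x)|$ is piecewise constant: on a level-$n$ cylinder inside it equals $E_{a_0\cdots a_{k-1}jb_k\cdots b_{n-1}}(\phi)\in[E_{a_0\cdots a_{k-1}}(\phi),F_{a_0\cdots a_{k-1}}(\phi)]$, handled as in the gap case; on a gap of order $m\in\{k+1,\ldots,n\}$ inside it equals $\tau_{a_0\cdots a_{k-1}jb_k\cdots b_{m-2}\Box_{i'}}$, for which Lemma \ref{slope}(b) with common prefix $a_0\cdots a_{k-1}$ provides the required bound. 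Collecting these estimates and absorbing numerical constants into the product $M(e^{-2\phi})V_{a_0\cdots a_{k-1}}(\phi)E_{a_0\cdots a_{k-1}}^2(\phi)K(f_0)$ yields the first inequality.

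The ``moreover'' inequality for $x\in X_{a_0\cdots a_{N-1}\widehat i}(f_0)$ and $f_N$ follows the identical scheme at the transition level $N$, where the smooth map $f_0$ meets the first piecewise linear iterate $h_N$, with the Smoothing Lemma parameters adapted to $f_0$ and the second (base-case) assertion of Lemma \ref{slope}(a) replacing the inductive one. Passing these estimates back through Lemma \ref{variation} accounts for the $V_{a_0\cdots a_{N-1}}(\phi_0)$, $M(e^\phi-e^{\phi_0})$, and $M(e^{-\phi-\phi_0})$ factors appearing in the statement. I expect the main difficulty to be not conceptual but notational: keeping $\phi$- versus $\phi_0$-quantities straight, and remembering that the gap $X_{a_0\cdots a_{N-1}\Box_i}(f_0)$ is in general a different subset of $X$ from $X_{a_0\cdots a_{N-1}\Box_i}(h_{n-1})$ for $n\geq N+1$.
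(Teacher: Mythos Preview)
Your plan is correct and follows the same route as the paper's proof (Lemma~\ref{smoothing} combined with Lemma~\ref{slope}(a)); in fact your case analysis is more thorough than the paper's three-line argument, which as printed handles only $x$ lying in the gap $X_{a_0\cdots a_{k-1}\Box_i}(h_{n-1})$ and whose middle step $\max\{E_{a_0\cdots a_{k-1}(i-1)}(\phi),E_{a_0\cdots a_{k-1}i}(\phi)\}\leq E_{a_0\cdots a_{k-1}}(\phi)$ actually points the wrong way (the minimum over a sub-cylinder is at least the minimum over the parent). One small slip to watch in your sub-cylinder case: on a gap of order $m\in\{k+1,\ldots,n\}$ inside $X_{a_0\cdots a_{k-1}j}(h_{n-1})$, the map $f_n$ is \emph{smoothed}, not equal to $h_n$ (the construction in Sect.~\ref{perturb} perturbs every gap of order $N,\ldots,n$), so $|Df_n(x)|$ is only close to $\tau_{a_0\cdots a_{m-1}\Box_{i'}}$ via Lemma~\ref{smoothing} rather than equal to it; one more triangle inequality together with the Lemma~\ref{slope}(b) comparison you already invoke then closes the argument exactly as you outline.
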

\begin{proof}
From Lemma \ref{smoothing} and Lemma \ref{slope}(a) we have
\begin{align*}||Df_n(x)|-\tau_{a_0\cdots a_{k-1}\Box_i}|&\leq \max\{E_{a_0\cdots a_{k-1}(i-1)}(\phi),
E_{a_0\cdots a_{k-1}i}(\phi)\}-\tau_{a_0\cdots a_{k-1}\Box_i}\\
&\leq E_{a_0\cdots a_{k-1}}(\phi)-\tau_{a_0\cdots a_{k-1}\Box_i}\\
&\leq M(e^{-2\phi})V_{a_0\cdots a_{k-1}}(\phi)E_{a_0\cdots a_{k-1}}^2(\phi)K(f_0).\end{align*}
A proof of the second inequality in the lemma is analogous and hence we omit it.
\end{proof}

\subsection{Cauchy property}\label{cauchyp}
Starting from a $C^2$ map $f_0\in\mathscr{E}$ we have constructed a sequence $\{f_n(\phi,N)\}_{n\geq0}$ of $C^1$ maps on $X$.
We show that $\{f_n\}_{n\geq0}$ is a Cauchy sequence in $\mathscr{E}$
which is contained in a $C^1$ neighborhood of $f_0$.
\begin{lemma}\label{C1start}
 For every $\varepsilon>0$ there exist a neighborhood $\mathscr{U}$ of $\phi_0=\log|Df_0|\circ\pi_{f_0}$ in $C(\Sigma_p)$ 
 and $N_0\geq1$ such that the following holds
 for every $\phi\in\mathscr{U}$, every integer $N>N_0$ and the sequence $\{f_n(\phi,N)\}_{n\geq0}$:
 \begin{itemize}  
\item[(a)] for every $n\geq N$, 
$$\|f_0-f_n\|_{C^1}\leq\varepsilon;$$

 \item[(b)]  $\{f_n\}_{n\geq0}$ is a Cauchy sequence in $\mathscr{E}$ with respect to the norm $\|\cdot\|_{C^1}$.

 \end{itemize}
 
  \end{lemma}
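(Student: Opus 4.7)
The plan is to extract $\mathscr{U}$ and $N_0$ from three moduli of continuity: the expansion-induced block shrinkage from (E2), the uniform continuity of $\phi_0$ on $\Sigma_p$ (so $\max_{w\in W_k}V_w(\phi_0)\to 0$), and the uniform continuity of $|Df_0|$ on $X$ (automatic since $f_0$ is $C^2$). I would shrink $\mathscr{U}\subset\hat{\mathscr{U}}$ so that $M(e^{\phi}-e^{\phi_0})$ is arbitrarily small and $M(e^{\pm 2\phi})$ is uniformly bounded for $\phi\in\mathscr{U}$, and require $N_0>\hat N$ so that Lemmas \ref{d-ini}, \ref{slope}, and \ref{gapd} all apply.

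For (a) I split $\|f_0-f_n\|_{C^1}$ into its $C^0$ and derivative parts. The $C^0$ part is essentially free: by the induction property (P)$_n$ together with the endpoint-matching built into Lemma \ref{smoothing}, both $f_0$ and $f_n$ map each order-$N$ block $X_{a_0\cdots a_{N-1}}(f_0)$ onto the same image interval $X_{a_1\cdots a_{N-1}}(f_0)$, and they coincide outside these blocks since $f_n\equiv h_n\equiv f_0$ on $G_{N-1}$. Since (E2) gives $|X_{a_1\cdots a_{N-1}}(f_0)|\leq c^{-1}\lambda^{-(N-1)}$, the $C^0$ distance is $<\varepsilon/2$ for $N$ large. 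For the derivative part, $Df_n\equiv Df_0$ on $G_{N-1}$, so it suffices to treat $x$ in a gap of $h_{n-1}$ of order $k\geq N$, for which I would chain
\begin{align*}
\bigl||Df_n(x)|-|Df_0(x)|\bigr|\leq{}&\bigl||Df_n(x)|-\tau_{a_0\cdots a_{k-1}\Box_i}\bigr|+\bigl|\tau_{a_0\cdots a_{k-1}\Box_i}-E_{a_0\cdots a_{k-1}}(\phi)\bigr|\\
&+\bigl|E_{a_0\cdots a_{k-1}}(\phi)-E_{a_0\cdots a_{k-1}}(\phi_0)\bigr|+\bigl|E_{a_0\cdots a_{k-1}}(\phi_0)-|Df_0(x)|\bigr|,
\end{align*}
bounding the four summands via Lemmas \ref{gapd}, \ref{slope}(a), \ref{variation}, and the uniform continuity of $|Df_0|$ respectively. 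The last term uses that $x$ lies in the order-$N$ $f_0$-block of diameter $\leq c^{-1}\lambda^{-N}$; each summand is small uniformly in the data once $N$ is large and $\mathscr{U}$ is small, delivering the other $\varepsilon/2$.

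For (b), the $C^0$ Cauchy estimate is immediate from the same block-diameter argument applied to $f_n$ and $f_m$. For the derivatives, given $\varepsilon'>0$ I further shrink $\mathscr{U}$ and enlarge $N_0$ so that $(1+M(e^{2\phi})K(f_0))V_w(\phi)$ and the Lemma \ref{gapd} error are $<\varepsilon'/4$ for every word $w$ of length $\geq N_0-1$. For $m>n\geq N_0$ and $x\in X$, I locate $x$ in a gap of $h_{n-1}$ of order $k\geq N_0$ with prefix $a_0\cdots a_{k-1}$ and in a gap of $h_{m-1}$ of some order $k'\geq k$ whose prefix agrees with $a_0\cdots a_{k-1}$ up to its first $k$ symbols; Lemma \ref{slope}(b) then controls the difference of the two slope values, and Lemma \ref{gapd} applied to both $f_n$ and $f_m$ controls the two perturbation errors, yielding $\|f_n-f_m\|_{C^1}<\varepsilon'$. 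I expect the main obstacle to be exactly this partition-matching step: although $X_{a_0\cdots a_{N_0-1}}(h_n)=X_{a_0\cdots a_{N_0-1}}(f_0)$ at the coarse scale (a consequence of $h_n\equiv f_0$ on $G_{N_0-1}$ together with (P)$_n$), the finer partitions $X_{a_0\cdots a_{k-1}}(h_n)$ genuinely depend on $n$, so verifying that the $h_{n-1}$- and $h_{m-1}$-itineraries of a single $x$ share a prefix of length $k$ requires careful unwinding of the nested block structure produced in Section \ref{PL}.
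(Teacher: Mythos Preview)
Your approach is essentially correct and very close to the paper's, with one organizational difference worth noting. For part (a), the paper does \emph{not} chain directly from $|Df_0(x)|$ to $|Df_n(x)|$; instead it first bounds $\|f_0-f_N\|_{C^1}\leq\varepsilon/2$ (where everything lives in genuine $f_0$-blocks since $h_{N-1}=f_0$), then establishes the general Cauchy-type estimate
\[
\|f_n-f_m\|_{C^1}\leq \sup_{w,i}\bigl|X_{w\widehat i}(f_{n-1})\bigr|+\text{const}\cdot\sup_{w\in W_n}V_w(\phi),
\]
and applies it with $n=N$ to get $\|f_N-f_n\|_{C^1}\leq\varepsilon/2$, finishing (a) by the triangle inequality. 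This routing through $f_N$ has the advantage that the comparison with $|Df_0|$ is done only at level $N$ where the block structure literally coincides with that of $f_0$, so the coarse-matching claim $X_{a_0\cdots a_{N-1}}(h_{n-1})=X_{a_0\cdots a_{N-1}}(f_0)$ is never needed for (a). Your direct four-term chain works too, but it relies on exactly that matching to control the fourth term; the paper's organization avoids it.

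For (b), your argument is essentially the paper's: it treats $x$ not in any gap of order $\leq n$, takes $a_0\cdots a_{n-1}$ with $x\in X_{a_0\cdots a_{n-1}\widehat i}(h_{n-1})$, and splits into two cases according to whether $x$ lies in a gap of order $k\in[n,m]$ or not, invoking Lemma~\ref{slope}(b) and Lemma~\ref{gapd} in the gap case. The paper, like you, tacitly uses that the $h_{m-1}$-prefix of $x$ extends $a_0\cdots a_{n-1}$; it does not spell out the nested-block argument you correctly flag as the delicate point. One minor imprecision in your (a): the phrase ``$x$ in a gap of $h_{n-1}$ of order $k\geq N$'' does not cover points in the deepest blocks $X_{a_0\cdots a_n}(h_n)$, which lie in no gap; you should work instead with the $\widehat i$-sets (which cover everything and are exactly what Lemma~\ref{gapd} addresses).
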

 \begin{proof}
%Take a neighborhood $\mathscr{U}$ of $\phi_0$ in $\hat{\mathscr{U}}$ such that 
%for every $\phi\in{\mathscr{U}}$,
%\begin{equation}\label{U}M(e^\phi-e^{\phi_0})\leq\frac{\varepsilon}{3}
%\ \text{ and }\
%M(e^{\phi}-e^{\phi_0})\left(\sup_{x\inX}|Df_0(x)|\right)^2
%K(f_0)\leq\frac{\varepsilon}{3}.\end{equation}
Let $\varepsilon>0$.
 Let $\phi\in\hat{\mathscr{U}}$ and $N>\hat N$ be an integer.
Depending on $\varepsilon$
we will choose $\phi$ that is sufficiently close to $\phi_0$, and then choose a sufficiently large $N$.

 We first estimate $\|f_0-f_N\|_{C^1}$.
 Let $a_0\cdots a_{N-1}\in W_N$ and
 $i\in\{1,2,\ldots,p-1\}$. 
 From the construction,
$|f_0(x)-f_N(x)|\leq |X_{a_1\cdots a_{N-1}\widehat i}(f_0)|$
holds for every $x\in X_{a_0\cdots a_{N-1}\widehat i}(f_0)$.
(E2) for $f_0$ implies $|X_{a_1\cdots a_{N-1}\widehat i}(f_0)|\leq c\lambda^{-N+1}$.
Hence
\begin{equation}\label{fn1}\sup_{x\in X}|f_0(x)-f_N(x)|\leq c\lambda^{-N+1}\leq\frac{\varepsilon}{4},\end{equation}
where the last inequality holds for sufficiently large $N$.

 Let $a_0\cdots a_{N-1}\in W_N$ and
 $i\in\{1,2,\ldots,p-1\}$. For every $x\in X_{a_0\cdots a_{N-1}\widehat i}(f_0)$
we have
\begin{equation*}
|Df_0(x)-Df_N(x)|\leq||Df_0(x)|-\tau_{a_0\cdots a_{N-1}\Box_i}|+|\tau_{a_0\cdots a_{N-1}\Box_i}-|Df_N(x)||.\end{equation*}
The second term is bounded by Lemma \ref{gapd}.
We estimate the first term.
If $|Df_0(x)|\leq\tau_{a_0\cdots a_{N-1}\Box_i}$, then from $\tau_{a_0\cdots a_{N-1}\Box_i}\leq E_{a_0\cdots a_{N-1}}(\phi)$ in Lemma \ref{slope}(a) and from Lemma \ref{variation} we have
\begin{align*}
||Df_0(x)|-\tau_{a_0\cdots a_{N-1}\Box_i}|&\leq  E_{a_0\cdots a_{N-1}}(\phi)-E_{a_0\cdots a_{N-1}}(\phi_0)\\
&\leq V_{a_0\cdots a_{N-1}}(\phi)+M(e^\phi-e^{\phi_0}).\end{align*}
If $|Df_0(x)|>\tau_{a_0\cdots a_{N-1}\Box_i}$, then from
$|Df_0(x)|\leq F_{a_0\cdots a_{N-1}}(\phi_0)$ and the second inequality in Lemma \ref{slope}(a) we have
\begin{align*}||Df_0(x)|-\tau_{a_0\cdots a_{N-1}\Box_i}|\leq& V_{a_0\cdots a_{N-1}}(\phi_0)
+2M(e^{-\phi-\phi_{0}})V_{a_0\cdots a_{N-1}}(\phi_0)\\
&+M(e^{-\phi-\phi_{0}})M(e^{\phi_0}-e^\phi)E_{a_0\cdots a_{N-1}}^2(\phi_0)
K(f_0).\end{align*}
It follows that
\begin{equation}\label{fn2}\sup_{x\in X}|Df_0(x)-Df_N(x)|\leq \frac{\varepsilon}{4},\end{equation}
provided $\phi$ is sufficiently close to $\phi_0$ and $N$ is sufficiently large.
From \eqref{fn1} and \eqref{fn2} we obtain
\begin{equation}\label{fn}\|f_0-f_N\|_{C^1}\leq\frac{\varepsilon}{2}.\end{equation}

%Since $h_m\equiv h_n$ on  
 %$G_n,$ $f_m\equiv f_n$ on  
 %$G_n$
 
 Let $\varepsilon_0$, $c_0$, $\lambda_0$ be the constants
for which the conclusions of Lemma \ref{standard} holds with respect to $f_0$.
Let $\varepsilon\in(0,\varepsilon_0)$.
Let $m$, $n$ be integers with $m>n\geq N$. 
We estimate $\|f_n-f_m\|_{C^1}$.
%Choose an integer $N_1\geq N_0$ such that
%\begin{equation}\label{N0}
% c_0^{-1}\lambda_0^{-N_1}+(1+2M(e^{2\phi})K(f_0))\cdot \sup_{a_0\cdots a_{N_1-1}\in\mathcal W_{N_1}}V(\phi,a_0\cdots a_{N_1-1})\leq\frac{\varepsilon}{2}.%\end{equation}
If $x\in X$ is contained in a gap of order $\leq n$ we have
$h_m(x) =h_n(x)$, and thus
 $f_m(x)=f_n(x)$.
 Suppose $x\in X$ is not contained in any gap of order $\leq n$.
Then, there exist $a_0\cdots a_{n-1}\in W_n$ and $i\in\{1,2,\ldots,p-1\}$ such that $x\in X_{a_0\cdots a_{n-1}\widehat i}(h_{n-1})$.
The construction of $\{h_n\}_{n\geq0}$ in Sect.\ref{PL} implies 
 \begin{align*}\label{trans-eq3}
  |f_n(x)-f_m(x)|&\leq\sup\{|h_n(y)-h_m(y)|\colon y\in X_{a_0\cdots a_{n-1}\widehat i}(h_{n-1})\}\\
  &=|X_{a_1\cdots a_{n-1}\widehat i}(h_{n-1})|\\
  &=|X_{a_1\cdots a_{n-1}\widehat i}(f_{n-1})|.
 \end{align*}
 Since $x\in X_{a_0\cdots a_{n-1}\widehat i}(h_{n-1})$ and $a_0\cdots a_{n-1}\in W_n$ are arbitrary,
we obtain $$ \sup_{x\in X}|f_n(x)-f_m(x)|\leq\sup_{\stackrel{a_1\cdots a_{n-1}\in W_{n-1}}{i\in\{1,2,\ldots,p-1\}}}
|X_{a_1\cdots a_{n-1}\widehat i}(f_{n-1})|.$$

Proceeding to the estimate of derivatives, again
let $x\in X$ and let $a_0\cdots a_{n-1}\in W_n$, $i\in\{1,2,\ldots,p-1\}$ be such that $x\in X_{a_0\cdots a_{n-1}\widehat i}(h_{n-1})$.
 We treat two cases separately.
 \medskip

\noindent{\it Case I. \underline{$x$ is not contained in a gap of order $\leq m$.}}
We have $|Df_n(x)|=E_{a_0\cdots a_{n-1}(i-1)}(\phi)$ or $|Df_n(x)|=E_{a_0\cdots a_{n-1}i}(\phi)$, and
$|Df_m(x)|=E_{a_0\cdots a_{m-1}}(\phi)$. Hence \begin{equation*}\label{c1eq1}
 |Df_n(x)-Df_m(x)|\leq V_{a_0\cdots a_{n-1}}(\phi).
 \end{equation*}

\noindent{\it Case II. \underline{$x$ is contained in a gap of order $k\in[n,m]$.}}
Let $a_0\cdots a_{k-1}\in W_k$ and $j\in\{1,2,\ldots,p-1\}$ be such that $x\in X_{a_0\cdots a_{k-1}\Box_j}(h_{m-1})$.
 We have
 \begin{align*}
 |Df_n(x)-Df_m(x)|\leq&||Df_n(x)|-\tau_{a_0\cdots a_{n-1}\Box_i}|+|\tau_{a_0\cdots a_{n-1}\Box_i}-\tau_{a_0\cdots a_{k-1}\Box_j}|\\
 &+|\tau_{a_0\cdots a_{k-1}\Box_j}-|Df_m(x)||.
 \end{align*}
The first and the third terms are bounded by Lemma \ref{gapd}.
For the second term, Lemma \ref{slope}(b) gives
$$|\tau_{a_0\cdots a_{n-1}\Box_i}-\tau_{a_0\cdots a_{k-1}\Box_j}|\leq \left(1+M(e^{2\phi})K(f_0)\right)
V_{a_0\cdots a_{n-1}}(\phi).$$
Hence we obtain
\begin{align*}
|Df_n(x)-Df_m(x)|\leq&2M(e^{-2\phi})\sum_{\ell\in\{k,n\}}V_{a_0\cdots a_{\ell-1}}(\phi)
E_{a_0\cdots a_{\ell-1}}^2(\phi)K(f_0)\\
&+\left(1+M(e^{2\phi})K(f_0)\right)V_{a_0\cdots a_{n-1}}(\phi).
 \end{align*}
 Since $x\in X_{a_0\cdots a_{n-1}\widehat i}(h_{n-1})$ and $a_0\cdots a_{n-1}\in W_n$ are arbitrary,
we obtain
 $$\sup_{x\in X}|Df_n(x)-Df_m(x)|\leq {\rm const}\cdot \sup_{a_0\cdots a_{n-1}\in W_n}V_{a_0\cdots a_{n-1}}(\phi).$$
 where the multiplicative constant only depends on $\phi$ and $f_0$.
 Overall, %there exists a constant $C=C(\phi_0,\phi,K(f_0))>0$ such that
for every integers $m$, $n$ with $m>n\geq N$,
\begin{align}\label{C1cau} \|f_n-f_m\|_{C^1}\leq&
\sup_{\stackrel{a_1\cdots a_{n-1}\in W_{n-1}}{i\in\{1,2,\ldots,p-1\}}}|X_{a_1\cdots a_{n-1}\widehat i}(f_{n-1})|\\
&+{\rm const}\cdot \sup_{a_0\cdots a_{n-1}\in W_n}V_{a_0\cdots a_{n-1}}(\phi).\notag
\end{align}
Since $f_{N-1}=f_0$, for every $n\geq N+1$ we have
\begin{align*} \|f_N-f_n\|_{C^1}\leq&
\sup_{\stackrel{a_1\cdots a_{N-1}\in W_{N-1}}{i\in\{1,2,\ldots,p-1\}}}|X_{a_1\cdots a_{N-1}\widehat i}(f_{0})|\\
&+{\rm const}\cdot \sup_{a_0\cdots a_{N-1}\in W_N}V_{a_0\cdots a_{N-1}}(\phi)\leq\frac{\varepsilon}{2},\end{align*}
where the last inequality holds provided $\phi$ is sufficiently close to $\phi_0$ and $N$ is sufficiently large
depending on $\phi$. 
From this and \eqref{fn} we obtain $\|f_0-f_n\|_{C^1}\leq\varepsilon$
for every $n\geq N$.
Lemma \ref{standard}
gives $|X_{a_1\cdots a_{n-1}\widehat i}(f_{n-1})|\leq c_0^{-1}\lambda_0^{-n}$, and so
the first term of \eqref{C1cau} converges to zero as $n\to\infty$.
The convergence of the second term follows from
the uniform continuity of $\phi$.
\eqref{C1cau} implies that $\{f_n\}_{n\geq0}$ is a Cauchy sequence.
\end{proof}

\subsection{End of the proof of the Realization Lemma}\label{end}
We complete the proof of the Realization Lemma.

\begin{proof}[Proof of the Realization Lemma]

 Let $f_\infty$ denote the limit of the Cauchy sequence $\{ f_n\}_{n\geq 0}$.
Then $f_\infty$ satisfies (E1).
 By Lemma \ref{C1start},
 $\|f_0-f_\infty\|_{C^1}\leq\varepsilon$ holds. Lemma \ref{standard} implies that $f_\infty$
 satisfies (E2).
If $x\in\Lambda(f_\infty)$ and $\pi_{f_\infty}^{-1}(x)=\{a_n\}_{n\geq0}$
then
$x\in \bigcap_{n\geq 0} X_{a_0\cdots a_{n}}(f_\infty)$.
%From (iii), for every $n\geq N$ and every $a_0\cdots a_n\in\mathcal W_{n+1}$,
%$|Df_n|\equiv  E(\phi,a_0\cdots a_{n})$
%on $(a_0\cdots a_n;f_\infty)$, and thus
By construction, $|Df_n(x)|\equiv  E_{a_0\cdots a_{n}}(\phi)$ holds for every $n\geq N$, and
therefore $$\log|Df_\infty(x)|=\lim_{n\to\infty}\log|D f_n(x)|=\lim_{n\to\infty}\log E_{a_0\cdots a_{n}}(\phi)=\phi(\pi_{f_\infty}^{-1}(x)),$$
which yields $\log|Df_\infty|\circ\pi_{f_\infty}=\phi$.
\end{proof}

\section{On the proof of Theorem B}
In this last section we prove Theorem B.
In Sect.\ref{contreras} we recall the result of Contreras \cite{Con16}
on ergodic optimization for expanding maps.
In Sect.\ref{approx} we show that any map in $\mathscr{E}$ is approximated by another whose derivative
is locally constant.
In Sect.\ref{LIP} we refine the construction in Sect.\ref{PL} and prove a Lipschitz version of the Realization Lemma.
In Sect.\ref{endB} we complete the proof of Theorem B.

\subsection{The result of Contreras on optimization by periodic measures}\label{contreras}
Let $(Y,d)$ be a compact metric space and $\phi$ a real-valued Lipschitz continuous function on $Y$.
The Lipschitz norm of $\phi$ is given by
$$\|\phi\|_{\rm Lip}=\max_{y\in Y}|\phi(y)|+\sup_{\stackrel{y,z\in Y}{y\neq z}}\frac{|\phi(y)-\phi(z)|}{d(y,z)}.$$
Let ${\rm Lip}(Y,d)$ denote the space of Lipschitz continuous functions on $Y$
endowed with the topology given by the norm $\|\cdot\|_{\rm Lip}$.
A Lipschitz continuous map $T\colon Y\circlearrowleft$ is {\it expanding}  
if there exist an integer $m\geq1$ and a constant $\lambda>1$ such that for every $y\in Y$
there exist $\ell_y\in\{1,2,\ldots, m\}$, a neighborhood $U_y$ of $y$ in $Y$ and continuous maps $S_i\colon U_y\to Y$, $i\in\{1,\ldots,\ell_y\}$ such that the following holds:

\begin{itemize}
\item[-] $S_i(U_y)\cap S_j(U_y)=\emptyset$ if $i\neq j$; 

\item[-] $T^{-1}(U_y)=\bigcup_{i=1}^{\ell_y}S_i(U_y)$;

\item[-] $T( S_i(z))=z$ for every $i\in\{1,2,\ldots,\ell_y\}$ and every $z\in U_y$;

\item[-] $ d(z,w)\geq\lambda d(S_i(z),S_i(w))$ for every $i\in\{1,2,\ldots,\ell_y\}$ and every $z,w\in U_y$.
\end{itemize}

\begin{theoremcont}{\rm (\cite[Theorem A]{Con16})}
Let $(Y,d)$ be a compact metric space and $T\colon Y\circlearrowleft$ an expanding map.
There exists an open dense subset $\mathscr{O}_{(Y,d)}$ of ${\rm Lip}(Y,d)$ 
such that for every $\phi\in \mathscr{O}_{(Y,d)}$ there exists a unique $\phi$-minimizing measure, and it is 
supported on a periodic orbit.
\end{theoremcont}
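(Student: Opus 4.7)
The overall plan is to split the proof into \emph{openness} and \emph{density} of $\mathscr{O}_{(Y,d)}$, and to handle both via the classical machinery of calibrated sub-actions (Ma\~n\'e--Conze--Guivarc'h) adapted to expanding maps. The first step would be a Ma\~n\'e lemma for Lipschitz potentials: for every $\phi\in{\rm Lip}(Y,d)$ there exists a Lipschitz function $u=u_\phi$ on $Y$ (a \emph{calibrated sub-action}) such that $\tilde\phi:=\phi+u\circ T-u-\beta(\phi)\geq 0$, where
\[\beta(\phi):=\inf\left\{\int\phi\,d\mu\colon\mu\in\mathcal M(T)\right\}.\]
The natural candidate is $u(y)=\inf_{n\geq 1,\,T^n(z)=y}\sum_{k=0}^{n-1}(\phi-\beta(\phi))(T^k(z))$, and its Lipschitz regularity follows from the uniform backward contraction $d(S_i(z),S_i(w))\leq\lambda^{-1}d(z,w)$ of the inverse branches, which makes the telescoping sum of Lipschitz oscillations summable. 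The \emph{Aubry set} $\Omega(\phi):=\{y\in Y\colon\tilde\phi(y)=0\}$ is then compact and $T$-invariant, and it contains the support of every $\phi$-minimizing measure.

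For openness, suppose $\phi_0\in\mathscr{O}_{(Y,d)}$ has unique minimizer supported on a periodic orbit $\mathcal O$. Then $\tilde\phi_0$ vanishes on $\mathcal O$ and, by compactness and uniqueness, is bounded below by some $\delta>0$ outside any fixed neighborhood $V$ of $\mathcal O$. For $\phi$ Lipschitz-close to $\phi_0$ the associated sub-action $u_\phi$ can be chosen close to $u_{\phi_0}$ (stability of the Ma\~n\'e construction under perturbation of $\phi$), hence $\tilde\phi$ is uniformly close to $\tilde\phi_0$; the strict separation $\delta$ then forces $\Omega(\phi)\subset V$. Inside $V$, the uniform expansion rules out any non-trivial invariant set disjoint from $\mathcal O$, so $\mathcal O$ remains the unique minimizer for $\phi$.

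For density, given $\phi\in{\rm Lip}(Y,d)$, I would use the shadowing/specification property implicit in the expanding hypothesis to produce a periodic orbit $\mathcal O$ of some period $n$ on which the Birkhoff average $\frac{1}{n}S_n\phi$ is within $\eta$ of $\beta(\phi)$: pick a generic point $y$ for a $\phi$-minimizing measure, take $n$ large enough that $\frac{1}{n}S_n\phi(y)\approx\beta(\phi)$, and shadow the segment $(y,T(y),\dots,T^{n-1}(y))$ by a true $n$-periodic orbit. Then I would subtract from $\phi$ a non-negative Lipschitz bump $\psi$ that is strictly positive on $\mathcal O$ and supported in a thin tube around $\mathcal O$; the replacement $\phi-\psi$ makes $\mathcal O$ the strict minimizer, and the openness step then supplies an open Lipschitz neighborhood of $\phi-\psi$ contained in $\mathscr{O}_{(Y,d)}$.

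The main obstacle is the density step: the bump $\psi$ must be simultaneously small in Lipschitz norm (so that $\|\phi-(\phi-\psi)\|_{\rm Lip}$ is small) and large enough on $\mathcal O$ to defeat every competing invariant measure, including those whose supports thread through the tube around $\mathcal O$. The key idea is to work with the \emph{normalized} potential $\tilde\phi\geq 0$ instead of $\phi$, so that competing invariant measures already contribute non-negatively while the integral over $\mathcal O$ is near zero; a bump whose spatial support shrinks geometrically while its height shrinks more slowly has small Lipschitz norm but still depresses the Birkhoff average along $\mathcal O$ by a definite amount. Calibrating the scale of the tube against the expansion rate $\lambda$, so that no competing ergodic measure spends a positive fraction of time in it, is the heart of Contreras's argument.
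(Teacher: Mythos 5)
First, note that the paper does not prove this statement at all: it is quoted as Theorem~A of Contreras's paper \cite{Con16} and used as a black box, so there is no internal proof to compare with; the only question is whether your sketch would stand as a proof of Contreras's theorem, and it would not. The decisive gap is the density step, which is exactly the content of \cite{Con16} (essentially the Yuan--Hunt conjecture in the Lipschitz category), and the bump argument as you state it fails. Quantitatively: shadowing gives a periodic orbit $\mathcal O$ whose $\phi$-average exceeds $\beta(\phi)$ by some $\eta>0$; a non-negative bump $\psi$ of height $h$ supported in an $r$-tube around $\mathcal O$ has Lipschitz norm of order $h/r$, so smallness in ${\rm Lip}(Y,d)$ forces $h\lesssim\varepsilon r$. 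For $\phi-\psi$, the orbit $\mathcal O$ gains the full $h$, but any competing invariant measure $\mu$ gains $\int\psi\,d\mu$, which is close to $h$ whenever $\mu$ spends most of its time in the tube --- and if the $\phi$-minimizing measure is not periodic, its support may well thread the tube, so it gains almost as much as $\mathcal O$ while starting $\eta$ lower. To make $\mathcal O$ win one needs periodic orbits whose excess action $\eta$ is small \emph{compared with the admissible bump height at the scale $r$ at which other minimizing configurations can be separated from $\mathcal O$}. Producing such orbits is the heart of Contreras's proof (a quantitative approximation-by-periodic-measures estimate in the spirit of Bressaud--Quas, run through an induction on scales), and your sketch simply assumes it in the final sentence ("calibrating the scale of the tube \dots is the heart of Contreras's argument").

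The openness step also rests on an unjustified claim: calibrated sub-actions are neither unique nor continuous in $\phi$, so "$u_\phi$ can be chosen close to $u_{\phi_0}$" needs proof and is not available in general. Moreover, the separation you would extract from $\tilde\phi_0\geq\delta$ off a neighborhood $V$ of $\mathcal O$ only yields a sup-norm inequality of the form $\int\tilde\phi_0\,d\mu\geq\delta\,\mu(Y\setminus V)$, which cannot beat a perturbation of size $\varepsilon$ for measures with $\mu(Y\setminus V)$ arbitrarily small (measures that shadow $\mathcal O$ for all but a vanishing fraction of time). The correct openness argument (Yuan--Hunt) exploits the Lipschitz smallness of $\phi-\phi_0$ together with exponential shadowing estimates near the periodic orbit, i.e.\ one needs the normalized potential to dominate a multiple of the distance to the orbit, a "locking" property that is not a formal consequence of uniqueness and is usually secured during the density construction. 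So while the skeleton (sub-actions, Aubry set, periodic approximation, bump, locking) is the right vocabulary, both halves of the proof are missing their essential quantitative inputs.
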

We will apply Theorem 2 to the left shift $\sigma\colon\Sigma_p\circlearrowleft$.
The topology of $\Sigma_p$ is equivalent to the topology generated by the distance $d_\theta$
given by
$$d_\theta(\underline{a},\underline{b})=\begin{cases}0& \text{if $\underline{a}=\underline{b}$;}\\
\theta^{s(\underline{a},\underline{b})}& \text{otherwise,}\end{cases}$$
where $\theta\in(0,1)$, $\underline{a}=\{a_n\}_{n\geq0}$,  $\underline{b}=\{b_n\}_{n\geq0}$ and
$s(\underline{a},\underline{b})=\min\{n\geq0\colon a_n\neq b_n\}$.
Note that $\sigma$ is expanding in the above sense.

\subsection{Approximation by maps with locally constant derivatives}\label{approx}

For $f\in\mathscr{E}$ we introduce the following additional condition:
\begin{itemize}
\item[(E3)] there exists an integer $n\geq1$ such that the value of $Df$ is constant
on each connected component of $\bigcap_{k=0}^{n-1} f^{-k}(X)$.
\end{itemize}
%Note that, if $f\in\mathscr{E}$ satisfies (E3) then $Df|_{\Lambda(f)}$ is a locally constant function.

\begin{lemma}\label{lc}
Let $f_0\in\mathscr{E}$. For any $\varepsilon>0$
there exists $f\in\mathscr{E}$ which satisfies
$\|f_0-f\|_{C^1}\leq\varepsilon$ and (E3).
\end{lemma}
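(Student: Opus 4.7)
The strategy is to fix a large $n$ (depending on $\varepsilon$) and directly construct $f$ that agrees with $f_0$ at the endpoints of every cylinder of level at most $n$, has constant derivative on each $n$-th level cylinder of $f_0$, and is a small $C^1$ perturbation of $f_0$ on the intervening gaps. The preservation of endpoint values will guarantee that the Markov cylinder decomposition of $f$ up to level $n$ coincides with that of $f_0$, so that (E3) holds with the same cylinders $X_{a_0\cdots a_{n-1}}(f)=X_{a_0\cdots a_{n-1}}(f_0)$.

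Concretely, I fix $n$ large enough that, by (E2) applied to $f_0$, every cylinder $X_{a_0\cdots a_{n-1}}(f_0)$ has length at most $c^{-1}\lambda^{-n}$; by uniform continuity of $Df_0$ on the compact set $X$, the oscillation of $Df_0$ on each such cylinder is then arbitrarily small. Then I define $f$ as follows. On each level-$n$ cylinder $I=[a,b]=X_{a_0\cdots a_{n-1}}(f_0)$, let $f|_I$ be the unique affine map with $f(a)=f_0(a)$ and $f(b)=f_0(b)$; its constant slope $c_I=(f_0(b)-f_0(a))/(b-a)$ equals $Df_0(\xi)$ for some $\xi\in I$ by the mean value theorem. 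On each gap $G=[g_1,g_2]=X_{a_0\cdots a_{k-1}\Box_i}(f_0)$ with $1\le k\le n-1$, set $f|_G=f_0|_G+r_G$, where $r_G$ is the cubic (Hermite) polynomial uniquely fixed by $r_G(g_1)=r_G(g_2)=0$ and $Dr_G(g_j)=c_j-Df_0(g_j)$ for $j=1,2$, with $c_j$ the constant slope of $f$ on the level-$n$ cylinder adjacent to $G$ at $g_j$.

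Continuity of $f$ and of $Df$ at every cylinder--gap boundary is forced by construction, so $f$ is $C^1$. Monotonicity on each $[\alpha_i,\beta_i]$ holds because both $c_I$ and $Df_0+Dr_G$ remain close to $Df_0$ (hence share its sign) once $n$ is large enough, and the equalities $f(\alpha_i)=f_0(\alpha_i),f(\beta_i)=f_0(\beta_i)\in\{0,1\}$ then give (E1). By uniform continuity of $Df_0$, each $|Dr_G(g_j)|=|c_j-Df_0(g_j)|$ is small, since $c_j$ is the average of $Df_0$ over a cylinder of length at most $c^{-1}\lambda^{-n}$ adjacent to $g_j$; the explicit Hermite formula for $r_G$ then gives $\|r_G\|_{C^1}\to 0$ uniformly over all gaps as $n\to\infty$. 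Combined with $|Df-Df_0|$ bounded by the oscillation of $Df_0$ on each cylinder, this yields $\|f_0-f\|_{C^1}\le\varepsilon$ for $n$ large; Lemma \ref{standard} then gives (E2), and (E3) is built into the definition.

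The main structural point to verify is that the Markov cylinder decomposition of $f$ up to level $n$ agrees with that of $f_0$. This follows because $f=f_0$ at every cylinder endpoint of level at most $n$---namely at $\{\alpha_i,\beta_i\}_{i=0}^{p-1}$ together with all gap endpoints of orders $1$ through $n-1$---and because $f$ is strictly monotone on each branch, so the preimages $f^{-k}(\{\alpha_j,\beta_j\})$ inside each $[\alpha_i,\beta_i]$ coincide with those of $f_0$ for all $k\le n-1$. The apparent danger that a low-order (and hence large) gap forces a large $C^1$ modification is circumvented by the additive nature of $r_G$: its $C^1$ size is controlled solely by the already-small endpoint slope discrepancies $c_j-Df_0(g_j)$, not by any attempt to replace $f_0$ wholesale on the gap.
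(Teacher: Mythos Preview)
Your proof is correct and in fact somewhat cleaner than the paper's. The paper's argument splits the gaps by order: on ``high-order'' gaps (those of order $k\geq n_2$, hence of small length) it discards $f_0$ entirely and replaces it by the interpolant of Lemma~\ref{smoothing}, while on ``low-order'' gaps it simply asserts that a $C^1$ interpolation with the required bound exists without writing one down. Your additive Hermite correction $f=f_0+r_G$ handles all gaps at once, because the $C^1$ size of $r_G$ is governed solely by the endpoint slope discrepancies $|c_j-Df_0(g_j)|$ (which are uniformly $o(1)$ as $n\to\infty$) and is insensitive to the gap length $|G|$; this is exactly the point you flag in your last sentence. The upshot is that your argument is self-contained and avoids both the case split and any appeal to Lemma~\ref{smoothing}.

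One small remark: for the inductive verification that $X_{a_0\cdots a_{k}}(f)=X_{a_0\cdots a_{k}}(f_0)$ for $k\le n$, it is cleanest to use the recursive description $X_{a_0\cdots a_k}(f)=(f|_{[\alpha_{a_0},\beta_{a_0}]})^{-1}\bigl(X_{a_1\cdots a_k}(f)\bigr)$ together with monotonicity of $f$ and the agreement $f=f_0$ at all level-$(\le n)$ cylinder endpoints; your last paragraph sketches this, but making the induction on $k$ explicit would tighten the exposition.
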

\begin{proof}
Let $f_0\in\mathscr{E}$ and $\varepsilon>0$.
From (E2)  there exists an integer $n_0\geq1$ such that
for every $n\geq n_0$ and every $a_0\cdots a_{n-1}\in W_n$,
\begin{equation}\label{lipeq1}\left|X_{a_0\cdots a_{n-1}}(f_0)\right|\leq\frac{\varepsilon}{2}.\end{equation}
Since $Df_0$ is uniformly continuous, there exists an integer $n_1\geq1$ such that
for every $n\geq n_1$ and every $a_0\cdots a_{n-1}\in W_n$,
\begin{equation}\label{lipeq2}\max_{x,y\in X_{a_0\cdots a_{n-1}}(f_0)} |Df_0(x)-Df_0(y)|\leq\frac{\varepsilon}{4}.\end{equation}
Put $n_2=\max\{n_0,n_1\}$. Let $n\geq n_2$. %For each $a_0\cdots a_{n-1}\in W_n$ 
%$$\overline{\it D}_{a_0\cdots a_{n-1}}=\max_{x\in X_{a_0\cdots a_{n-1}}(f_0)}|Df_0(x)|-
%\frac{\left|X_{a_1\cdots a_{n-1}}(f_0)\right|}{\left|X_{a_0\cdots a_{n-1}}(f_0)\right|}$$
%and
%$$\underbar{\it D}_{a_0\cdots a_{n-1}}=\frac{\left|X_{a_1\cdots a_{n-1}}(f_0)\right|}{\left|X_{a_0\cdots a_{n-1}}(f_0)\right|}-\min_{x\in
%X_{a_0\cdots a_{n-1}}(f_0)}|Df_0(x)|,$$
%and define
%$$D_{a_0\cdots a_{n-1}}=\max\left\{ \overline{\it D}_{a_0\cdots a_{n-1}},  \underbar{\it D}_{a_0\cdots a_{n-1}}     \right\}.$$
Define $f\in\mathscr{E}$ so that the following holds:

\begin{itemize}
\item[-] (On the complement of the union of gaps of order $\leq n-1$)
Let $a_0\cdots a_{n-1}\in W_n$. Then
$f\left(X_{a_0\cdots a_{n-1}}(f_0)\right)=X_{a_1\cdots a_{n-1}}(f_0)$ and
for every $x\in X_{a_0\cdots a_{n-1}}(f_0)$,
$$Df(x)=\pm\frac{\left|X_{a_1\cdots a_{n-1}}(f_0)\right|}{\left|X_{a_0\cdots a_{n-1}}(f_0)\right|}.$$
The sign is $+$ if $Df_0(x)>0$ and $-$ otherwise;

\item[-] (On gaps of low order) Let $k\in\{1,\ldots,n_2-1\}$, $i\in\{1,\ldots,p-1\}$ and
 let $G$ be a gap of order $k$. 
 Then
 \begin{equation}\label{geq1}\sup_{x\in G} |f_0(x)-f(x)|+\sup_{x\in G} |Df_0(x)-Df(x)|\leq\varepsilon.\end{equation}
%This definition makes sense provided $n$ is sufficiently large.

\item[-] (On gaps of high order)
Let $k\in\{n_2,\ldots,n-1\}$, $i\in\{1,\ldots,p-1\}$ and 
 consider the gap $(l,r)=X_{a_0\cdots a_{k-1}\Box_i}(h_{n-1})$ of order $k$. 
Let $\psi\colon [l,r]\to\mathbb R$ be a $C^1$ diffeomorphism for which 
the conclusion of Lemma \ref{smoothing} holds with $E=Df(l)$,
$E'=Df(r)$, $\tau=\frac{|f_0(l)-f_0(r)|}{l-r}$, $I=[l,r]$.
Then $f(x)=f_0(l)+\psi(x)$ holds
for every $x\in(l,r)$.
\end{itemize}

Let $a_0\cdots a_{n-1}\in W_n$.
From the definition of $f$ and \eqref{lipeq1}  
we have
\begin{equation}\label{geq4}\sup_{x\in X_{a_0\cdots a_{n-1} }(f_0)}|f_0(x)-f(x)|\leq  \left|X_{a_1\cdots a_{n-1}}(f_0)\right|\leq\frac{\varepsilon}{2}.\end{equation}
From the definition of $f$ and 
 \eqref{lipeq2} we have
\begin{equation}\label{geq5}\sup_{x\in X_{a_0\cdots a_{n-1}}(f_0)}|Df_0(x)-Df(x)|\leq\frac{\varepsilon}{4}.\end{equation}
Let $k\in\{ n_2,\ldots,n-1\}$, $i\in\{1,\ldots,p-1\}$ and 
 consider the gap $X_{a_0\cdots a_{k-1}\Box_i}(h_{n-1})$ of order $k$. 
Let $x\in X_{a_0\cdots a_{k-1}\Box_i}(h_{n-1})$.
 From \eqref{lipeq1},
\begin{equation}\label{geq2}|f_0(x)-f(x)|\leq\left| f_0(X_{a_0\cdots a_{k-1}\Box_i}(h_{n-1}))\right|\leq\frac{\varepsilon}{2}.\end{equation}
From Lemma \ref{smoothing} and \eqref{lipeq2}, 
\begin{equation}\label{geq3}|Df_0(x)-Df(x)|\leq||Df_0(x)|-\tau|+|\tau-|Df(x)||\leq\frac{\varepsilon}{4}+\frac{\varepsilon}{4}.\end{equation}
From \eqref{geq1}, \eqref{geq4}, \eqref{geq5}, \eqref{geq2}, \eqref{geq3}, $\|f_0-f\|_{C^1}\leq\varepsilon$ follows.
By Lemma \ref{standard}, (E2) holds for $f$.
Hence $f\in\mathscr{E}$.
(E3) follows from the definition of $f$.
\end{proof}

\subsection{Lipschitz continuity of $Df_\infty$}\label{LIP}
The next lemma is a version of the Realization Lemma.

\begin{lemma}\label{liplemma}
Let $f_0\in \mathscr{E}$ satisfy (E3) and %of class $C^2$.
 let $\theta\in(0,1/\sup_{x\in X}|Df_0(x)|)$.
  For every $\varepsilon>0$ there exists a neighborhood $\mathscr U$ of
  $\log|Df_0|\circ\pi_{f_0}$ in $C(\Sigma_p)$ 
such that for every $\phi\in \mathscr{U}\cap{\rm Lip}(\Sigma_p,d_\theta)$ 
there exists $f_\infty\in\mathscr{E}_{\rm Lip}$ such that
$$\|f_0-f_\infty\|_{C^1}\leq\varepsilon\ \ \text{and}\ \ \log|Df_\infty|\circ\pi_{f_\infty}=\phi.$$
\end{lemma}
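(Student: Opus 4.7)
The plan is to re-run the construction from Sections~\ref{PL}--\ref{end} to produce a Cauchy sequence $\{f_n(\phi,N)\}_{n\ge 0}$ in $\mathscr{E}$ with limit $f_\infty$ satisfying $\|f_0-f_\infty\|_{C^1}\le\varepsilon$ and $\log|Df_\infty|\circ\pi_{f_\infty}=\phi$, exactly as in the Realization Lemma. Every estimate from Section~3 transfers, so the only new content is to verify that $Df_\infty$ is Lipschitz on $X$, i.e.\ $f_\infty\in\mathscr{E}_{\rm Lip}$.

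The driving observation is that $\phi\in\mathrm{Lip}(\Sigma_p,d_\theta)$ with Lipschitz constant $L$ forces the geometric decay
$$V_{a_0\cdots a_{n-1}}(\phi)\le e^{\|\phi\|_\infty}L\theta^n,$$
while (E2), (E3) and Lemma~\ref{distortion} together give a lower bound $c(\sup_X|Df_\infty|)^{-k}$ for the length of every gap of $f_\infty$ of order $k$. Since $\theta<1/\sup_X|Df_0|$ by hypothesis, shrinking $\mathscr U$ so that $\sup_X|Df_\infty|$ stays close to $\sup_X|Df_0|$ yields $\theta\sup_X|Df_\infty|<1$, and hence the variation decays strictly faster than the gap length.

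Using this, I would bound the Lipschitz constant of $Df_\infty$ by a case analysis. If $x,y\in\Lambda(f_\infty)$ share the cylinder of depth $n$ but not $n+1$, then $|Df_\infty(x)-Df_\infty(y)|\le e^{\|\phi\|_\infty}L\theta^n$, while $|x-y|$ is at least a positive multiple of $(\sup_X|Df_\infty|)^{-n}$, coming from the gap of order $n$ that separates the two sub-cylinders; hence the ratio is $O((\theta\sup_X|Df_\infty|)^n)$, uniformly bounded. If $x,y$ lie in a common gap of order $k\ge N$, then $Df_\infty$ restricted to the gap equals $D\psi$ from Lemma~\ref{smoothing} and is piecewise linear with at most two pieces; Lemma~\ref{slope}(a) gives the slope on each piece as $\lesssim V_{a_0\cdots a_{k-1}}(\phi)/(\text{gap length})=O((\theta\sup_X|Df_\infty|)^k)$, again uniformly bounded. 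On gaps of order less than $N$ one has $Df_\infty=Df_0$, and Lipschitz continuity is inherited from $f_0\in C^2$. General $x,y$ are handled by subdividing $[x,y]$ at the gap boundaries it meets and using continuity of $Df_\infty$.

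The main obstacle is the second case: uniformly controlling the slope of the piecewise linear $D\psi$ across gaps of all orders. This is exactly why the hypothesis $\theta<1/\sup_X|Df_0|$ is imposed; without it, $(\theta\sup_X|Df_\infty|)^k$ would blow up with $k$. Assumption (E3) enters through the gap length lower bound: via local constancy of $Df_0$ and the resulting tight bounded distortion for $f_\infty$, it lets that bound be stated in terms of $\sup_X|Df_0|$ rather than a larger quantity that would force a stricter condition on $\theta$.
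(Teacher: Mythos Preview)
Your outline matches the paper's argument: re-run the Section~3 construction (which goes through because (E3) supplies the bounded distortion of Lemma~\ref{distortion} in place of the $C^2$ hypothesis), then bound the ratio $V_{a_0\cdots a_{k-1}}(\phi)/|X_{a_0\cdots a_{k-1}\Box_i}(h_k)|$ uniformly in $k$. The paper gets the gap-length lower bound from $|X_{a_0\cdots a_{k-1}\widehat i}(h_k)|\ge(\beta_i-\alpha_{i-1})(\sup_X|Df_0|+\varepsilon)^{-k}$ together with Lemma~\ref{d-ini}, and having first fixed $\varepsilon$ so that $\theta(\sup_X|Df_0|+\varepsilon)\le 1$, the ratio is bounded. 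From this it concludes that the Lipschitz constants of the $Df_n$ are uniformly bounded and passes to the $C^1$ limit; your direct case analysis on $Df_\infty$ is an equivalent packaging.

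One slip: for gaps of order $<N$ you invoke ``$f_0\in C^2$'', but in this lemma $f_0$ is \emph{not} assumed $C^2$, only (E3). Condition (E3) says $Df_0$ is constant on sufficiently deep cylinders; it says nothing about the regularity of $Df_0$ on the gaps between them, so Lipschitz continuity there cannot be read off the hypothesis as you claim. Relatedly, your last paragraph mislocates the role of (E3): it is not what lets the gap-length bound be stated in terms of $\sup_X|Df_0|$ (that comes simply from $\|f_0-f_n\|_{C^1}\le\varepsilon$); rather, (E3) is what replaces $C^2$ in Lemma~\ref{distortion} and thereby makes Lemma~\ref{d-ini} and the entire Section~3 construction available for this $f_0$. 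The paper's own proof is equally brief on the low-order gaps.
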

\begin{proof}
Let $\varepsilon>0$ be such that
$\theta\leq1/(\sup_{x\in X}|Df_0(x)|+\varepsilon)$.
Although $f_0$ may not be of class $C^2$, 
from (E3) the estimate as in Lemma \ref{distortion} remains to hold,
and thus
the construction in Sect.\ref{PL} remains to work:
there exists a neighborhood $\mathscr U$ of 
$\log|Df_0|\circ\pi_{f_0}$ in $C(\Sigma_p)$ 
such that for every
$\phi\in \mathscr{U}$ there exists a Cauchy sequence $\{f_n\}_{n\geq0}$ in $\mathscr{E}$ 
such that $\|f_0-f_n\|_{C^1}\leq\varepsilon$ for every $n\geq1$ and
its $C^1$ limit $f_\infty$ belongs to $\mathscr{E}$
and satisfies
and $\log|Df_\infty|\circ\pi_{f_\infty}=\phi.$
We have
$\sup_{x\in X}|Df_n(x)|\leq  \sup_{x\in X}|Df_0(x)|+\varepsilon$.
Let $a_0\cdots a_{n-1}\in W_n$ and $i\in\{1,2,\ldots,p-1\}$.
From $X_{a_0\cdots a_{n-1}\widehat i}(h_{n})=X_{a_0\cdots a_{n-1}\widehat i}(f_{n})$,
$f^n\left(X_{a_0\cdots a_{n-1}\widehat i}(f_{n})\right)=[\alpha_{i-1},\beta_i]$
and the Mean Value Theorem we have
$$\left|X_{a_0\cdots a_{n-1}\widehat i}(h_{n})\right|\geq\frac{\left|f^n\left(X_{a_0\cdots a_{n-1}\widehat i}(f_{n})\right)\right|}{(\sup_{x\in X}|Df_n(x)|)^n}
\geq\frac{\beta_i-\alpha_{i-1}}{(\sup_{x\in X}|Df_0(x)|+\varepsilon)^n}.$$
Let $\phi\in{\rm Lip}(\Sigma_p,d_\theta)$.
Let
 $L(\phi)$ denote the Lipschitz constant of $\phi$.
Then
$$V_{a_0\cdots a_{n-1}}(\phi)\leq M(e^{\phi})L(\phi)\theta^n,$$
and therefore
$$\frac{V_{a_0\cdots a_{n-1}}(\phi)}{\left|X_{a_0\cdots a_{n-1}\widehat i}(h_{n})\right|}\leq
\frac{M(e^{\phi})L(\phi)}{\beta_i-\alpha_{i-1}}\theta^n(\sup_{x\in X}|Df_0(x)|+\varepsilon)^n\leq
\frac{M(e^{\phi})L(\phi)}{\beta_i-\alpha_{i-1}}.$$
Lemma \ref{d-ini} implies
$$\left|X_{a_0\cdots a_{n-1}\Box_i}(h_{n})\right|\geq \frac{1}{2K(f_0)+1}\left|X_{a_0\cdots a_{n-1}\widehat i}(h_{n})\right|.$$
Hence
$$\frac{V_{a_0\cdots a_{n-1}}(\phi)}{\left|X_{a_0\cdots a_{n-1}\Box_i}(h_{n})\right|}\leq(2K(f_0)+1)\frac{V_{a_0\cdots a_{n-1}}(\phi)}
{\left|X_{a_0\cdots a_{n-1}\widehat i}(h_{n})\right|}
\leq (2K(f_0)+1)\frac{M(e^{\phi})L(\phi)}{\beta_i-\alpha_{i-1}}.$$
From this estimate and the construction of $\{f_n\}_{n\geq0}$ in Sect.\ref{perturb}  
it follows that the Lipschitz constant of $Df_n$ is uniformly 
bounded over all $n$. 
The Lipschitz continuity of $Df_\infty$ is a direct consequence of the uniform convergence
of $Df_n$ to $Df_\infty$.
 \end{proof}

\subsection{End of proof of Theorem B}\label{endB}
To finish, we need an auxiliary lemma.

\begin{lemma}\label{lip}
Let $f_0\in\mathscr{E}$. There exist $\theta\in(0,1)$ and a neighborhood $\mathscr{W}$ of $f_0$ in $\mathscr{E}$ such that
 for every $f\in\mathscr{W}$, $\pi_f\colon\Sigma_p\to\Lambda(f)$ is Lipschitz continuous with respect to the distance $d_\theta$.
\end{lemma}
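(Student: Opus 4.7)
The plan is to show that cylinder sets $X_{a_0\cdots a_{n-1}}(f)$ shrink exponentially in $n$ at a rate controlled uniformly on a neighborhood of $f_0$, and then match the exponential rate to a suitable $\theta$.

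First I would apply Lemma \ref{standard} to $f_0$ to obtain constants $\varepsilon_0>0$, $c_0>0$, $\lambda_0>1$ with the property that $|Df^n(x)|\geq c_0\lambda_0^n$ holds for every $f\in\mathscr{E}$ with $\|f-f_0\|_{C^1}\leq\varepsilon_0$ and for every $x\in\bigcap_{k=0}^{n-1}f^{-k}(X)$. Take $\mathscr{W}=\{f\in\mathscr{E}\colon\|f-f_0\|_{C^1}\leq\varepsilon_0\}$ and fix any $\theta\in[\lambda_0^{-1},1)$.

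Next, for $\underline{a},\underline{b}\in\Sigma_p$ with $s(\underline{a},\underline{b})=n$, both $\pi_f(\underline{a})$ and $\pi_f(\underline{b})$ lie in the cylinder interval $X_{a_0\cdots a_{n-1}}(f)$, which is a single closed interval because $f$ restricts to a diffeomorphism from $[\alpha_i,\beta_i]$ onto $[0,1]$. The map $f^n$ sends $X_{a_0\cdots a_{n-1}}(f)$ bijectively onto $[0,1]$, so by the Mean Value Theorem there exists $\xi$ in this interval with $|Df^n(\xi)|\cdot|X_{a_0\cdots a_{n-1}}(f)|=1$. The uniform expansion estimate from Lemma \ref{standard} then gives $|X_{a_0\cdots a_{n-1}}(f)|\leq c_0^{-1}\lambda_0^{-n}\leq c_0^{-1}\theta^n$.

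Consequently $|\pi_f(\underline{a})-\pi_f(\underline{b})|\leq c_0^{-1}\theta^n=c_0^{-1}d_\theta(\underline{a},\underline{b})$, proving that $\pi_f$ is Lipschitz with constant at most $c_0^{-1}$, uniformly for $f\in\mathscr{W}$. There is no real obstacle here; the only point that requires care is ensuring the choice of $\theta$ and of the Lipschitz constant depends only on $f_0$ (and not on $f$), which is precisely what Lemma \ref{standard} delivers by making $\varepsilon_0$, $c_0$, $\lambda_0$ uniform over the neighborhood.
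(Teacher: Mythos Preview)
Your proof is correct and follows essentially the same route as the paper: invoke Lemma~\ref{standard} to obtain uniform expansion constants $c_0,\lambda_0$ on a neighborhood $\mathscr{W}$ of $f_0$, choose $\theta\in[\lambda_0^{-1},1)$, and bound $|\pi_f(\underline{a})-\pi_f(\underline{b})|$ by the length of the common cylinder interval, which is at most $c_0^{-1}\lambda_0^{-s(\underline{a},\underline{b})}\leq c_0^{-1}\theta^{s(\underline{a},\underline{b})}$. The only difference is that you spell out the Mean Value Theorem step explicitly, whereas the paper's proof leaves this implicit.
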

\begin{proof}
By Lemma \ref{standard} there exist constants $c>0$, $\lambda>1$ and a neighborhood $\mathscr{W}$ of $f_0$ in
 $\mathscr{E}$ such that the following holds for every $f\in\mathscr{W}$:
if $n\geq1$ and $x\in\bigcap_{k=0}^{n-1}f^{-k}(X)$, then $|Df^n(x)|\geq c\lambda^n$.
Let $\theta\in[\lambda^{-1},1)$.
For every $\underline{a}$, $\underline{b}\in\Sigma_p$, $\underline{a}\neq\underline{b}$ we have
$|\pi_f(\underline{a})-\pi_f(\underline{b})|\leq c\lambda^{-s(\underline{a},\underline{b})}\leq c\theta^{s(\underline{a},\underline{b})}=cd_\theta(\underline{a},\underline{b}).$
\end{proof}

 \begin{proof}[Proof of Theorem B]
Set $$\mathscr{O}=\left\{f\in\mathscr{E}_{\rm Lip}\colon \log|Df|\circ\pi_f\in\bigcup_{0<\theta<1}\mathscr{O}_{(\Sigma_p,d_\theta)}\right\}.$$
From Theorem 2, for maps in $\mathscr{O}$ the Lyapunov minimizing measure is unique, and it is supported on a periodic orbit.
From Lemma \ref{lip}, $\mathscr{O}$
 is an open subset of $\mathscr{E}_{\rm Lip}$.

  It remains to show $\mathscr{O}$ is dense in $\mathscr{E}$.
  Let
 $f_0\in\mathscr{E}$ satisfy (E3).
 By virtue of Lemma \ref{lc} it suffices to show that $f_0$ is approximated by elements of $\mathscr{O}$.
 Let $\theta\in(0,1/\sup_{x\in X}|Df_0(x)|)$.
 Since $\phi_0=\log|Df_0|\circ\pi_{f_0}$ is a locally constant function,
  $\phi_0\in {\rm Lip}(\Sigma_p,d_\theta)$ holds.
 By Lemma \ref{liplemma}, for every $\varepsilon>0$
 there exists a neighborhood $\mathscr{U}$ of $\phi_0$ in $C(\Sigma_p)$ 
  such that for every $\phi\in\mathscr{U}\cap   {\rm Lip}(\Sigma_p,d_\theta) $ 
  there exists $f_\infty\in\mathscr{E}_{\rm Lip}$ such that
 $\|f_0-f_\infty\|_{C^1}\leq\varepsilon$ and $\log|Df_\infty|\circ\pi_{f_\infty}=\phi$.
Since $\mathscr{O}_{(\Sigma_p,d_\theta)}$ is a dense subset of ${\rm Lip}(\Sigma_p,d_\theta)$ from Theorem 2,
 $\mathscr{U}\cap  \mathscr{O}_{(\Sigma_p,d_\theta)}\neq\emptyset$ holds.
If $\phi\in\mathscr{U}\cap  \mathscr{O}_{(\Sigma_p,d_\theta)}$ then
$f_\infty\in\mathscr{O}$.
  \end{proof}

\end{document}